\theoremstyle{plain}
\newtheorem{theorem}{Theorem}[section]
 \newtheorem{lemma}[theorem]{Lemma}
\newtheorem{remark}{Remark}[section]
 \def\beqlb{\begin{eqnarray}}\def\eeqlb{\end{eqnarray}}
 \def\beqnn{\begin{eqnarray*}}\def\eeqnn{\end{eqnarray*}}
 \def\qed{\hfill$\Box$\medskip}
\newcommand{\bcen}{\begin{center}}
\newcommand{\ecen}{\end{center}}
\newcommand{\bgeqn}{\begin{equation}}
\newcommand{\edeqn}{\end{equation}}
\begin{document}

\title{Small positive values and limit theorems for supercritical branching processes with immigration in
random environment}

\author{ Yinxuan Zhao\thanks{%
School of Mathematical Sciences \& Laboratory of Mathematics and Complex
Systems, Beijing Normal University, Beijing 100875, P.R. China. Email:
yinxuanzhao@mail.bnu.edu.cn} and Mei Zhang\thanks{%
Corresponding
author. School of Mathematical Sciences \& Laboratory of Mathematics and Complex
Systems, Beijing Normal University, Beijing 100875, P.R. China. Email:
meizhang@bnu.edu.cn} }
\maketitle

\noindent{\bf Abstract}\quad
 Let $(Z_n)$ be a supercritical branching process with immigration in a random environment. The small positive values and some lower deviation inequalities for $Z$  are investigated. Based on these results, the central limit theorem of $\log Z_n$ and the Edgeworth expansion are obtained. The study is taken under the assumption that each individual produces $0$ offspring with a positive probability.
 \vspace{0.3cm}

\noindent{\bf Keywords}\quad branching process, immigration, random environment, limit theorem,
supercritical.

\noindent{\bf MSC}\quad Primary 60J80; Secondary 60F10\\[0.4cm]

\bigskip

\section{Introduction}
 Branching processes in a random environment (BPRE) are popular models introduced by Smith and Wilkinson \cite{smith1969} and have been extensively studied in recent years, see \cite{Athreya1971a, Athreya1971b}, \cite{2013ban}-\cite{2023grama}, \cite{2012huang, Vat2017}, etc. As a nature extension of BPRE and due to its interest in applications, the branching processes with immigration in a random environment (BPIRE) attracted attention of many researchers. For example,  Kesten et al~\cite{kks1975}, Key \cite{key1987}, Hong and Wang \cite{HW2016} used BPIRE to get asymptotics of a random walk in a random environment; Bansaye \cite{Ban2009} investigated BPIRE to study cell contamination; Vatutin \cite{vat2011} considered BPIRE to study polling systems with random regimes of service. Recently,  
 for classical BPRE $(Z_n^0)$, Buraczewski and Damek \cite{2022dar} proved the central limit theorem of $\log Z_n^0$ conditionally on the survival set, which improved the result in \cite{2012huang} where the hypothesis $\mathbb{P}(Z_1^0=0)=0$ is needed.  A number of results including local probability estimates, central limit theorems, Berry-Esseen estimates, Cram\'{e}r's large deviation expansion, Edgeworth expansion and renewal theorems have been recently proved in \cite{2014ban}--\cite{2023grama}. For BPIRE $(Z_n)$, the rate of $\mathbb{P}(Z_n=j|Z_0=k)$ under the assumption $\mathbb{P}(Z_1=0)=0$ was obtained in \cite{Huang2022}.  Wang and Liu~\cite{wang2017, 2021liu} proved the central limit theorem of $\log Z_n$ and  got the Berry-Esseen bound under the assumption that $\mathbb{P}(Z_1=0)=0$.

  In the paper, we focus on the supercritical BPIRE $(Z_n)$ in the case that $\mathbb{P}(Z_1=0)>0$.
  We start with the decay rate of $n$-step transition probability $\mathbb{P}(Z_n=j|Z_0=k)$ and show  its exponential rate  as $n\rightarrow\infty$ by Theorem \ref{small} in Section 3. Based on this, together with the discussion on the path of $Z_n$, we give the  lower-deviation-type inequality  by Theorem \ref{lowerdeviation}.
 Differently from \cite{Huang2022},  we use the argument related with Markov chain to simplify the estimation of $\mathbb{P}(Z_n=j|Z_0=k)$ (Lemma \ref{smallequal}),  then consider about the decomposition of $Z_n$ by tracing the ancestry of the individual in generation $n$ (Lemma \ref{<0}), and finally use the property of associated random walk to get the proof. In Section 4, under the assumption $\mathbb{P}(Z_1=0)>0$,  we obtain the central limit theorem of $\log Z_n$, the Edgeworth expansion and the renewal theorem   (Theorems \ref{CLT}-- \ref{renewal}). We first estimate the harmonic moments of $Z_n$ and the moments of $\log Z_n$ by the lower deviation of $Z_n$, then study the deviation between $Z_{n+1}$ and $m_{n+1}Z_n$, and consequently get the asymptotic behavior of the Fourier transforms of $\log Z_n$ which yields the desired results. 

The rest of this paper is organized as follows. In Section 2, we present the description of the model and basic assumptions. In Section 3, we prove the small positive values and lower deviation of $Z_n$.  The limit theorems of $\log Z_n$ are studied in Section 4. 
In the following context,  $C,C_1,C_2,\cdots,\beta,\beta_0,\beta_1,\beta_2,\cdots$ denote positive constants whose value may change from place to place. With $f(n)=o(g(n))$ as $n\rightarrow \infty$, we refer that $\lim_{n\to \infty} f(n)/g(n)=0$. With $f(n)=O(g(n))$ as $n\rightarrow \infty$, we refer that there exists  $M>0$ such that $\limsup_{n\rightarrow \infty}f(n)/g(n)\leq M$.

\section{Description of the model}
We now give a description of the model. Let $\Delta =\left( \Delta _{1},\Delta _{2}\right) $ be the space of all
pairs of probability measures on   $\mathbb{N}=\{0,1,2,\ldots \}.$
Equipped with the component-wise metric of total variation $\Delta $ becomes
a Polish space. Let $\mathbf{Q}=\{f,h\}$ be a random vector with  independent 
 components taking values in $\Delta $, and let $\mathbf{Q}%
_{n}=\{f_{n},h_{n}\},n=1,2,\ldots ,$ be a sequence of independent copies of $%
\mathbf{Q}$. The infinite sequence $\xi=\left\{ \mathbf{Q}_{1},%
\mathbf{Q}_{2},...\right\} $ is called a random environment.  We denote by $\mathbb{P}(\cdot|\xi)$ the $quenched$ $probability$, i.e., the conditional probability when the environment $\xi$ is given and define the $annealed$ $probability$ by $\mathbb{P}(\cdot)=\mathbb{E}[\mathbb{P}(\cdot|\xi)]$. 
A sequence of $\mathbb{N}$-valued random variables $\mathbf{Z}=\left\{
Z_{n},\ n\in \mathbb{N}\right\} $ specified on the
probability space $(\Omega,\mathcal{F},\mathbb{P})$ is called a branching
process with immigration in a random environment, if $Z_{0}=k\in\mathbb{N}\backslash\{0\}$ and, given $\xi$ the process $\mathbf{Z}
$ is a Markov chain with
\begin{equation}\label{define}
\mathcal{L}\left( Z_{n}|Z_{n-1}=z_{n-1},\xi=(\mathbf{q}_{1},\mathbf{q%
}_{2},...)\right) =\mathcal{L}(N _{n,1}+\ldots +N_{n,z_{n-1}}+Y_{n})
\end{equation}%
for every $n\in \mathbb{N}\backslash \left\{ 0\right\} $, $%
z_{n-1}\in \mathbb{N}$ and $\mathbf{q}_{1}=\left( F_{1},H_{1}\right),%
\mathbf{q}_{2}=\left( F_{2},H_{2}\right),\cdots\in \Delta$, where $N
_{n,1},N_{n,2},\ldots $ are i.i.d. random variables with distribution $%
F_{n} $ and independent of the random variable $Y_{n}$ with distribution
$H_{n} $. In the language of branching processes, $Z_{n-1}$ is the $(n-1)$th
generation size of the population, $F_{n}$ is  the offspring distribution
of each individual at generation $n-1$ and $H_{n}$ is the
law of the number of immigrants  at generation $n$. Note that we do not assume the independence between the random distributions $f_n$ and $h_n$ for fixed $n$.

 Along with the process $\mathbf{Z}$, we consider the classic branching process $\mathbf{%
Z^0}=\left\{Z^0_{n},\ n\in \mathbb{N}\right\} $ in the random environment $%
\xi$. Given $\xi$, $\mathbf{%
Z^0}$ is a Markov chain with  $Z^0_{0}=Z_0$ and, for $n\in \mathbb{N}_0$,
\begin{equation}\label{classic}
\mathcal{L}\left( Z_{n}^0\bigg|Z_{n-1}^0=z_{n-1},\xi=(\mathbf{q}_{1},\mathbf{q%
}_{2},...)\right) =\mathcal{L}(N_{n,1}+\ldots +N_{n,z_{n-1}}).
\end{equation}

Consider the
so-called associated random walk $\mathbf{S}=\left( S_{0},S_{1},...\right) $%
. This random walk has initial state $S_{0}=0$ and increments $%
X_{n}=S_{n}-S_{n-1}$, $n\geq 1$, defined as
$
X_{n}:=\log \mathfrak{m}\left( f_{n}\right),
$
which are i.i.d. copies of  $X:=\log $ $%
\mathfrak{m}(f)$ with
$
\mathfrak{m}(f):=\sum_{j=0}^{\infty }jf\left( \left\{ j\right\} \right).
$ With each pair of measures $(f,h)$ we associate the respective probability
generating functions (p.g.f.)
\begin{equation*}
f(s):=\sum_{j=0}^{\infty }f\left( \left\{ j\right\} \right) s^{j},\qquad
h(s):=\sum_{j=0}^{\infty }h\left( \left\{ j\right\} \right) s^{j}.
\end{equation*}
For convenience, denote $m_n:=\mathfrak{m}\left( f_{n}\right)$ and $\lambda_n:=\sum_{j=0}^{\infty }jh_n\left( \left\{ j\right\} \right) $. Then $m_n=\mathbb{E}[N_{n,i}|\xi]$ and $\lambda_n=\mathbb{E}[Y_n|\xi]$.
 For $0\leq m\leq n$, let $f_{m,n}$ be the convolutions of the probability generating functions $f_1,\cdots,f_n$ specified  by
\beqnn
f_{m,n}:=f_{m+1}\circ\cdots\circ f_n
\eeqnn
 with $f_{n,n}(s):=s$ by convention.

In the following context, we always assume that
 \beqnn
  0<m_1<\infty \quad  \mbox{a.s.}
\eeqnn
We consider the process under the condition
 $$\mathbb{E}\log m_1\in (0,\infty),$$ which means the process $\mathbf{%
Z^0,Z}$ is supercritical.

  Throughout the paper, we study the model under the assumption that $\mathbb{P}(h(0)<1)>0$. When $\mathbb{P}(h(0)<1)=0$, our model degenerates to BPRE, then the main results below (Theorems \ref{small}-- \ref{lowerdeviation}, \ref{CLT}--\ref{renewal}) coincide with \cite[Theorem 2.1]{2014ban} and \cite[Lemma 3.1, Theorems 2.2--2.4]{2022dar}. 

We use $\mathbb{E}_k$ and $\mathbb{P}_k$  to denote the expectation and probability, respectively,   emphasizing  the process with $k$ initial individuals, i.e., $\mathbb{P}_k(\cdot):=\mathbb{P}(\cdot|Z_0=k)$.

\section{Small positive values and lower deviation}
\subsection{Basic assumptions and main results}
We need the following assumptions:

 \noindent\textbf{Assumption (A)}\quad  $\mathbb{P}(h(0)>0,f(0)>0,f(\{1\})>0)>0$. 


\noindent\textbf{Assumption (B)}\quad There exists $ \delta\in (0,1)$ such that $f(\{0\})<\delta$, $a.s.$

 \begin{remark}\label{hutong}
	We observe that under \textbf{Assumption (A)}, for all $k,j\geq 1$, there exists $l\geq 0$ such that $\mathbb{P}_k(Z_l=j)>0$.
\end{remark}

In this section, we state our limit theorems for small positive values and lower deviation of $Z_n$. We start with the exponential rate of $\mathbb{P}_k(Z_n=j)$ under our assumption:

\begin{theorem}\label{small}
Assume that   \textbf{Assumptions  (A) (B)} hold. Then there exists $\varrho\in(-\infty,0)$ such that  for all $k,j\geq 1$,
	\beqnn
	 \lim_{n\rightarrow\infty}\frac{1}{n}\log\mathbb{P}_k(Z_n=j):=\varrho.
	\eeqnn
Moreover, for every sequence $\{k_n\}$ such that $k_n\geq 1$ for $n$ large enough and $k_n/n\rightarrow 0$ as $n\rightarrow\infty$,
	\beqnn
	\lim_{n\rightarrow\infty}\frac{1}{n}\log\mathbb{P}_1(1\leq Z_n\leq k_n)=\varrho.
	\eeqnn

\end{theorem}


For classical BPRE $\mathbf{Z^0}$, Bansaye and B\"{o}inghoff \cite{2014ban} proved that under assumption $\mathbb{E}\log m_1>0$ and $\mathbb{P}(f(0)>0)>0$, for all $k,j\in Cl(\mathcal{I})$,
\beqnn
\lim_{n\rightarrow\infty}\frac{1}{n}\log\mathbb{P}_k(Z^0_n=j)=\rho\in(-\infty,0],
\eeqnn
where $\mathcal{I}:=\{j\geq 1:\mathbb{P}(f(\{j\})>0,f(0)>0)>0\}$ and $Cl(\mathcal{I}):=\{k\geq 1:\exists n\geq 0\ \mbox{and}\ j\in\mathcal{I} \ \mbox{with}\ \mathbb{P}_j(Z^0_n=k)>0\}$;  If further assume $f(0)<\delta$, $a.s.$ with some $\delta\in(0,1)$ and $\mathbb{E}\log m_1<\infty$, then $\rho<0$. Moreover, if $f$ is fractional linear, the constant $\rho$ was computed in \cite{2014ban} and \cite{2014boi}.
 In the case $f(0)=0$, a.s.,   Grama et al \cite[Theorem 2.4]{2023grama} proved that under assumption $\mathbb{P}(0<f(\{1\})<1)>0$, for all state $j\in Cl(\{k\})$,
\beqnn
\mathbb{P}_k(Z_n^0=j)\sim q_{k,j}(\mathbb{E}[f(\{1\})^k])^n
\eeqnn
as $n\rightarrow\infty$ with some constants $q_{k,j}\in(0,\infty)$.

For BPIRE $\mathbf{Z}$, in the case  $f(0)=0, a.s.$, it has been proved in \cite[Theorem 1.2]{Huang2022} that if $\mathbb{E}[h(0)f(\{1\})^k]>0$, then for all state $j\in Cl(\{k\})$,
\beqnn
\mathbb{P}_k(Z_n=j)\sim \bar{q}_{k,j}(\mathbb{E}[h(0)f(\{1\})^k])^n
\eeqnn
for some constants $\bar{q}_{k,j}>0$.  

Our next result is the lower-deviation-type inequality of $Z_n$:

\begin{theorem}
	\label{lowerdeviation}
	
	Assume that \textbf{Assumptions  (A) (B)} hold. Then there are   $\theta\in (0,\mathbb{E}\log m_1),\beta>0$ and $C>0$ such that for all $k\geq 1$,
	\beqlb\label{lowerde}
	\mathbb{P}_k(1\leq Z_n\leq e^{\theta n})\leq Ce^{-\beta n}.
	\eeqlb
\end{theorem}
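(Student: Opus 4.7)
Plan. Following the strategy of Buraczewski--Damek \cite{2022dar} for BPRE, I would split the lower-deviation event by means of an intermediate time $t_n := \lfloor \alpha n\rfloor$ with $\alpha\in(0,1)$ to be fixed, and combine Theorem \ref{small} with a Chernoff-type survival bound on the descendants of the population at time $t_n$. For a threshold $L=L(n)$ to be chosen, the natural decomposition is
\beqnn
\mathbb{P}_k(1\le Z_n \le e^{\theta n})
&\le& \mathbb{P}_k(1\le Z_{t_n}\le L) + \mathbb{P}_k(Z_{t_n}=0,\,Z_n\ge 1)\\
&&{}+\mathbb{P}_k(Z_{t_n}> L,\,1\le Z_n\le e^{\theta n}),
\eeqnn
and I would treat each summand separately.

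The first summand is exactly what Theorem \ref{small} is tailored to: for each fixed $j\in\{1,\ldots,L\}$ one has $\mathbb{P}_k(Z_{t_n}=j)\le C_j\,e^{(\varrho+\varepsilon)t_n}$ for all large $n$, so after summing over $j\le L$ the contribution is at most $C(L)\,e^{(\varrho+\varepsilon)\alpha n}$, which is exponentially small provided $L$ grows at most polynomially in $n$. For the third summand I would condition on $Z_{t_n}=i$ and use the independence of the environments before and after $t_n$ to factor
\beqnn
\mathbb{P}_k(Z_{t_n}>L,\,Z_n\le e^{\theta n})=\sum_{i>L}\mathbb{P}_k(Z_{t_n}=i)\,\mathbb{P}_i(Z_{n-t_n}\le e^{\theta n}).
\eeqnn
Under $\mathbb{P}_i$ and conditionally on the post-$t_n$ environment, $Z_{n-t_n}$ dominates the sum of $i$ i.i.d.\ copies of the classical BPRE $Z^0_{n-t_n}$ started from a single particle; supercriticality forces each such copy to be strictly positive with probability bounded below by some $p_0>0$ once $n-t_n$ is large. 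A Chernoff bound then yields $\mathbb{P}_i(Z_{n-t_n}\le e^{\theta n})\le e^{-c\,i}$ as soon as $i$ exceeds $2e^{\theta n}/p_0$, and choosing $L$ beyond this threshold makes this summand super-exponentially small.

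The main obstacle is the second summand $\mathbb{P}_k(Z_{t_n}=0,\,Z_n\ge 1)$. By the environment-independence it factors as $\mathbb{P}_k(Z_{t_n}=0)\,\mathbb{P}_0(1\le Z_{n-t_n}\le e^{\theta n})$, and the first factor in general does not decay in $n$, so one must control the companion lower-deviation probability for the process started from $Z_0=0$. This is genuinely new relative to the BPRE case and is where the immigration perturbations bite. I would close the loop either by running the whole decomposition simultaneously for the two initial conditions $Z_0=k\ge 1$ and $Z_0=0$ (obtaining a coupled pair of inequalities that can be iterated), or by exploiting Assumptions (C) and (D) to show that under $\mathbb{P}_0$ the first immigrant producing a surviving family arrives after a stochastically bounded waiting time, reducing the $\mathbb{P}_0$-estimate to the $\mathbb{P}_{\ge 1}$-estimate already handled. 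Assembling the three contributions and choosing $\alpha$, $L$, and $\theta<\mathbb{E}\log m_1$ appropriately then yields \reff{lowerde}.
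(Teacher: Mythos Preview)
Your decomposition has a genuine internal contradiction in the choice of the threshold $L$. For the first summand you rely on Theorem~\ref{small} (or more precisely the second part of Lemma~\ref{smallequal}), which only controls $\mathbb{P}_k(1\le Z_{t_n}\le L)$ when $L=o(t_n)$; this is why you write ``provided $L$ grows at most polynomially in $n$''. But your Chernoff step for the third summand counts \emph{surviving lines}, each contributing merely $\ge 1$, so the binomial tail bound $\mathbb{P}_i(Z_{n-t_n}\le e^{\theta n})\le e^{-ci}$ is available only once $i\,p_0$ exceeds a multiple of $e^{\theta n}$; you then require $L\gtrsim e^{\theta n}$. These two requirements cannot be reconciled, and for $L$ in the intermediate range $[n,\,e^{\theta n}]$ the first summand is essentially the quantity you are trying to bound, so the argument becomes circular. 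A secondary issue is that your $p_0$ is a quenched survival probability, which depends on $\xi$ and is not uniformly bounded below; under the annealed law the $i$ subtrees share the environment and are not i.i.d., so the Chernoff step would need extra work even if the threshold problem were resolved.

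You have also misidentified the hard term. Under Assumptions~(B) and~(D), Lemma~\ref{pzn=0} gives $\mathbb{P}_k(Z_{t_n}=0)\le Ce^{-\beta t_n}$, so your second summand is immediately $\le Ce^{-\beta\alpha n}$ without any recursion on initial conditions. The paper's proof takes a different route altogether: it introduces the last time $\tau_n=\inf\{i\le n: Z_{i+1},\dots,Z_n\ge n_0\}$ that the process is below a \emph{fixed} level $n_0$; when $\tau_n>n/2$ it applies Theorem~\ref{small} to $\mathbb{P}_k(1\le Z_{\tau_n}<n_0)$, and when $\tau_n\le n/2$ it uses a one-step contraction $\mathbb{E}_m[(Z_1/m)^{-\epsilon};Z_1\ge 1]\le e^{-\beta_0}$ for $m\ge n_0$ (Lemma~\ref{moment}) to propagate a harmonic moment bound $\mathbb{E}_j[Z_n^{-\epsilon};Z_1,\dots,Z_n\ge n_0]\le n_0 e^{-\beta_0(n-1)}$, and then finishes with Markov's inequality. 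The point is that a \emph{fixed} cutoff $n_0$ plus a harmonic moment contraction replaces your growing cutoff $L$ and avoids both the threshold mismatch and the quenched-versus-annealed difficulty.
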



\subsection{Proofs}
  Recall the definition (\ref{define}). {For $k\geq 1$, let
	\beqnn
g_n(k,s):=\mathbb{E}_k[s^{Z_n}|\xi],\quad s\in[0,1]
	\eeqnn
	be the probability generating function of $Z_n$ under the quenched law. It is obvious that
\beqlb\label{quenchedpdf}
	g_n(k,s)=(f_{0,n}(s))^k\prod_{i=1}^n h_i(f_{i,n}(s)).
	\eeqlb

  Firstly, we give the crucial argument for exponential upper bound of $\mathbb{P}_k(Z_n=0)$. For this purpose, we need a technical lemma borrowing from the proof of \cite[Proposition 2.2(i)]{2014ban}:

\begin{lemma}\label{nuj-estimation}
  Assume that \textbf{Assumption (B)} hold.  Let $a\in \mathbb{N}$
be fixed and introduce
$$ \bar{f_1}(\{j\}) := f_1(\{j\}),\quad 0\le j< a, \quad \bar{f_1}(\{a\}) = f_1([a,\infty)).
$$
The corresponding truncated random variables are denoted similarly, e.g. by $\bar{X}$ and $\bar{S}$. Define $\breve{S}_n=\bar{S}_n-\varepsilon n$ with $0<\varepsilon<\mathbb{E}(\bar{X})$.  Define the prospective minima of $\breve{S}$ which are defined by $\nu(0):=0$ and
$$
\nu(j):=\inf\{n > \nu(j-1): \breve{S}_k>\breve{S}_n, \forall k>n\}.
$$
Then there exists $d\in(0,1)$ such that for all $j\geq 1$, $f_{\nu(j),n}(0)\leq 1-d$,  a.s. Moreover,  there exists $0<\epsilon< \mathbb{E}[\nu(1)]^{-1}$ such that for $n\geq 1$, 
 \beqnn
	\mathbb{P}(\sharp\{j\geq 0:\nu(j)\leq n\}<\epsilon n)\leq e^{-\alpha n}
	\eeqnn
 with some $\alpha>0$, where $\sharp\{\cdot\}$ denotes the cardinality of  the set.
\end{lemma}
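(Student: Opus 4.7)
The plan splits the lemma into its two assertions.

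For the first assertion $f_{\nu(j),n}(0)\le 1-d$ a.s., I would exploit the standard inequality for extinction probabilities of a time-inhomogeneous Galton--Watson process. After truncation at level $a$, all quenched second moments are uniformly bounded by $a^{2}$ and, via Assumption~(B), the quenched means satisfy $\bar m_i\ge 1-\delta$ a.s. Iterating the one-step bound
\begin{equation*}
\frac{1}{1-\bar f_i(1-u)} \;\le\; \frac{1}{\bar m_i u}+\frac{\bar f_i''(1)}{\bar m_i^{2}},
\end{equation*}
(valid when $u$ is small enough, with a separate trivial treatment when $u$ is of order one) yields
\begin{equation*}
\frac{1}{1-f_{m,n}(0)} \;\le\; e^{-(\bar S_n-\bar S_m)}+K\sum_{k=m+1}^{n}e^{-(\bar S_k-\bar S_m)}\quad\text{a.s.},
\end{equation*}
where $K=K(\delta,a)$ is a deterministic constant. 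Specializing at $m=\nu(j)$ and invoking the defining property of a prospective minimum of $\breve S$, namely $\bar S_k-\bar S_{\nu(j)}\ge\varepsilon(k-\nu(j))$ for all $k>\nu(j)$, I bound the right-hand side by the finite geometric series $1+K\sum_{\ell\ge 1}e^{-\varepsilon\ell}$, which produces the claim with $d=\bigl(1+K\sum_{\ell\ge 1}e^{-\varepsilon\ell}\bigr)^{-1}$.

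For the second assertion, I would observe that $(\nu(j))_{j\ge 0}$ is a renewal process: by the strong Markov property of $\breve S$ at $\nu(j)$, the post-$\nu(j)$ trajectory is distributed as an independent copy of $\breve S$, so the gaps $T_j:=\nu(j)-\nu(j-1)$ are i.i.d.\ with common mean $\mathbb{E}[\nu(1)]\in(0,\infty)$, finiteness being a consequence of the strictly positive drift $\mathbb{E}\bar X-\varepsilon>0$. The event $\{\sharp\{j\ge 0:\nu(j)\le n\}<\epsilon n\}$ coincides with $\{\sum_{j=1}^{\lceil\epsilon n\rceil}T_j>n\}$, a large-deviation event for the renewal partial sum because $1/\epsilon>\mathbb{E}[T_1]$. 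The truncated increments $\breve X_i=\log\bar m_i-\varepsilon$ are bounded --- from above by $\log a-\varepsilon$ and, via $\bar m_i>1-\delta$ from Assumption~(B), from below by $\log(1-\delta)-\varepsilon$ --- so a standard random-walk argument endows $T_1$ with an exponential moment. Chernoff's inequality then produces the desired bound $e^{-\alpha n}$.

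The delicate step is the first one: one has to set up the truncated comparison inequality carefully, because the naive quadratic estimate $\phi_i(u)=1-\bar f_i(1-u)\ge\bar m_i u/2$ is only valid for small $u$, requiring a separate a priori argument for large $u$ based on $\bar f_i(\{0\})<\delta$. This argument is classical in the BPRE literature and underlies the proof of \cite[Proposition 2.2(i)]{2014ban}; the real work here is to track how the constants depend on $(\delta,a,\varepsilon)$ so that they remain uniform in $j$ and $n$. The second step --- obtaining an exponential moment for $T_1$ --- is then routine once the boundedness of $\breve X$ is in hand.
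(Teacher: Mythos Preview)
The paper gives no independent proof of this lemma --- it is presented as ``borrowing from the proof of \cite[Proposition~2.2(i)]{2014ban}'' --- and your two-part sketch is exactly that classical argument, so the approaches coincide.

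One point deserves correction: the prospective minima $\nu(j)$ are \emph{not} stopping times (each depends on the entire future of $\breve S$), so you cannot invoke the strong Markov property at $\nu(j)$, and the post-$\nu(j)$ trajectory is not an unconditioned copy of $\breve S$. The renewal structure nevertheless holds, because $\{\nu(1)=m\}$ factorises as $C_m\cap\theta_mA$ with $C_m=\{\breve S_m=\min_{1\le k\le m}\breve S_k\}\in\sigma(\breve X_1,\ldots,\breve X_m)$ and $\theta_mA=\{\breve S_{m+\ell}-\breve S_m>0\ \forall\ell\ge1\}$ depending only on the post-$m$ increments; this makes the post-$\nu(1)$ increment sequence independent of the past and distributed as $(\breve X_k)_{k\ge1}$ conditioned on $A$, so the gaps $(T_j)_{j\ge2}$ are i.i.d.\ (with $T_1$ possibly having a different law but still exponential tails, by boundedness of $\breve X$). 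With this adjustment your Chernoff argument goes through unchanged.
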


\begin{lemma}\label{pzn=0}
	Assume that  \textbf{Assumptions (B)} hold. Then there are constants $\beta, C>0$ such that for all $k\geq 1$,
	\beqnn
	\mathbb{P}_k(Z_n=0)\leq Ce^{-\beta n}.
	\eeqnn
\end{lemma}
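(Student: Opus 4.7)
The plan is to combine the product formula \reff{quenchedpdf} with Lemma \ref{nuj-estimation} and Assumption (D). Dropping $(f_{0,n}(0))^k\le 1$ in \reff{quenchedpdf} and using the convex bound $h_i(s)\le 1-(1-h_i(\{0\}))(1-s)$ on $[0,1]$ together with $\log(1-x)\le-x$ yields
$$-\log\mathbb{P}_k(Z_n=0\mid\xi)\ge\sum_{i=1}^n(1-h_i(\{0\}))(1-f_{i,n}(0)).$$
Assumption (B) permits the application of Lemma \ref{nuj-estimation}: along the prospective minima $\nu(j)$ of $\breve{S}$ one has $1-f_{\nu(j),n}(0)\ge d$ a.s.\ for all $j\ge 1$ with $\nu(j)\le n$. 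Retaining only those terms in the sum above gives
$$\mathbb{P}_k(Z_n=0\mid\xi)\le\exp\Bigl(-d\sum_{j:\nu(j)\le n}(1-h_{\nu(j)}(\{0\}))\Bigr).$$

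It remains to show that this exponent is at most $-\beta n$ outside an exponentially small set of environments. Lemma \ref{nuj-estimation} also provides $N_n:=\#\{j:\nu(j)\le n\}\ge\epsilon n$ with probability $\ge 1-e^{-\alpha n}$. Assumption (D) supplies $c_0\in[0,1)$ and $p_0>0$ with $\mathbb{P}(h(\{0\})\le c_0)\ge p_0$, and in particular $\mathbb{E}[1-h(\{0\})]>0$. Conditioning on $\sigma(f_i:i\ge 1)$, the indices $\nu(j)$ become measurable and the variables $h_{\nu(j)}(\{0\})$ are conditionally independent with conditional laws $h(\{0\})\mid f=f_{\nu(j)}$. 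The renewal structure of prospective minima --- the excursions $(f_{\nu(j-1)+1},\ldots,f_{\nu(j)})$ are i.i.d.\ in the environment variables --- yields an SLLN for $(f_{\nu(j)})$, from which a Chernoff-type bound produces $\sum_{j:\nu(j)\le n}(1-h_{\nu(j)}(\{0\}))\ge cn$ outside a further exponentially small event. Averaging over $\xi$ then yields the claim.

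The hard part is precisely this last step. Because the paper permits arbitrary dependence between $f_n$ and $h_n$ within the same pair, the sequence $(h_{\nu(j)})_{j\ge 1}$ is not i.i.d., and the random index $\nu(j)$ biases the marginal law of $f_{\nu(j)}$. One must therefore use the renewal structure of the prospective minima to show that this biased marginal still charges the positive-probability event underlying Assumption (D), so that $\mathbb{E}[1-h(\{0\})\mid f_{\nu(1)}]$ has strictly positive expectation under its biased law; only then can a Chernoff bound close the estimate.
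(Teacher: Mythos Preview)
Your route is more circuitous than the paper's and stops short of a complete proof. The paper never introduces the convex bound on $h_i$ or passes to logarithms; it simply uses monotonicity, $h_i(f_{i,n}(0))\le h_i(1-d)$ at prospective-minimum indices and $\le 1$ elsewhere, to get
\[
\prod_{i=1}^n h_i(f_{i,n}(0))\ \le\ \prod_{j\ge 1:\,\nu(j)\le n}h_{\nu(j)}(1-d),
\]
splits on $\{N_n\ge\epsilon n\}$, and takes expectations directly, arriving at $(\mathbb{E}[h(1-d)])^{\epsilon n-1}$. Staying with the product avoids the entire SLLN/Chernoff layer you introduce; your steps involving $h_i(s)\le 1-(1-h_i(\{0\}))(1-s)$ and $\log(1-x)\le -x$ add nothing and can be dropped.

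The dependence issue you raise is legitimate --- the paper is terse at exactly that point --- but your proposed remedy is heavier than needed and, as you acknowledge, not carried out. The decisive structural fact, which you circle around but do not state cleanly, is that the event $\{i\text{ is a prospective minimum}\}=\{\breve S_k>\breve S_i,\ \forall k>i\}$ lies in $\sigma(f_k:k>i)$ and is therefore independent of the \emph{entire pair} $Q_i=(f_i,h_i)$, not just of $h_i$. Together with the i.i.d.\ cycle decomposition of the walk at its prospective minima (imported from~\cite{2014ban} via Lemma~\ref{nuj-estimation}), this makes the variables $h_{\nu(j)}(1-d)$ an i.i.d.\ sequence with mean strictly less than $1$ under Assumption~(D), and the geometric bound on the product follows at once. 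Your plan --- analyze the biased conditional law $\mathbb{E}[1-h(\{0\})\mid f_{\nu(1)}]$, argue it has positive mean, then run a separate Chernoff estimate on the sum --- would also reach the goal, but it re-derives by hand what the cycle structure already gives. As written, your proposal ends precisely where this structural input is required, so the ``hard part'' remains a sketch rather than a proof.
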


\begin{proof}
	From (\ref{quenchedpdf}) we have
	\beqlb\label{kzn=0}
	\mathbb{P}_k(Z_n=0)=\mathbb{E}[g_n(k,0)]=\mathbb{E}\bigg[(f_{0,n}(0))^k\prod_{i=1}^n h_i(f_{i,n}(0))\bigg]\leq \mathbb{E}\bigg[\prod_{i=1}^n h_i(f_{i,n}(0))\bigg].
	\eeqlb
	By Lemma \ref{nuj-estimation},
	\beqlb\label{secpart}
	\mathbb{E}\bigg[\prod_{i=1}^nh_i(f_{i,n}(0))\bigg]&\leq & \mathbb{P}(\sharp\{j\geq 0:\nu(j)\leq n\}<\epsilon n)+\mathbb{E}\bigg[\prod_{i=1}^nh_i(f_{i,n}(0));\sharp\{j\geq 0:\nu(j)\leq n\}\geq\epsilon n\bigg]\crcr
	&\leq & e^{-\alpha n}+(\mathbb{E}[h(1-d)])^{\epsilon n-1}\crcr
	&\leq & Ce^{-\beta n}
	\eeqlb
	with some constant $C,\beta>0$. Combining (\ref{kzn=0}) with (\ref{secpart}) yields the desired result.
\end{proof}

\begin{lemma}\label{smallequal}
	Assume that  \textbf{Assumptions  (A)} hold. Then for all $k,j\geq 1$, it holds that
	\beqnn
	\lim_{n\rightarrow\infty}\frac{1}{n}\log\mathbb{P}_k(Z_n=j)=\lim_{n\rightarrow\infty}\frac{1}{n}\log\mathbb{P}_1(Z_n=1)\in (-\infty,0].
	\eeqnn
	Moreover, for every sequence $\{k_n\}$ such that $k_n\geq 1$ for $n$ large enough and $k_n/n\rightarrow 0$ as $n\rightarrow\infty$,
	\beqnn
	\lim_{n\rightarrow\infty}\frac{1}{n}\log\mathbb{P}_1(Z_n=1)=\lim_{n\rightarrow\infty}\frac{1}{n}\log\mathbb{P}_1(1\leq Z_n\leq k_n).
	\eeqnn
\end{lemma}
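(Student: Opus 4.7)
The plan is to split the lemma into three pieces: establishing the subadditive limit $\varrho_0:=\lim_n n^{-1}\log\mathbb{P}_1(Z_n=1)$, transferring it to arbitrary pairs $(k,j)$ by the connectivity property of Remark~\ref{hutong}, and finally controlling the tail sum $\mathbb{P}_1(1\le Z_n\le k_n)$ through a quantitative refinement of that communication.

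First, the Markov property and the i.i.d.\ structure of the environment give $\mathbb{P}_1(Z_{n+m}=1)\geq \mathbb{P}_1(Z_n=1)\,\mathbb{P}_1(Z_m=1)$, so $a_n:=-\log\mathbb{P}_1(Z_n=1)$ is subadditive and $\lim_n a_n/n$ exists in $[0,\infty]$ by Fekete's lemma. Finiteness follows from $\mathbb{P}_1(Z_1=1)\geq \mathbb{E}[h_1(\{0\})f_1(\{1\})]\geq \gamma\,\mathbb{E}[f_1(\{1\})]>0$, where Assumption (C) supplies the $\gamma$ and Assumption (A) the strict positivity. This defines $\varrho_0\in(-\infty,0]$.

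Next, Remark~\ref{hutong} furnishes integers $l_1,l_2,l_3,l_4\geq 0$ such that $\mathbb{P}_1(Z_{l_1}=k)$, $\mathbb{P}_j(Z_{l_2}=1)$, $\mathbb{P}_k(Z_{l_3}=1)$, and $\mathbb{P}_1(Z_{l_4}=j)$ are all strictly positive. Chaining by the Markov property,
\[\mathbb{P}_1(Z_{l_1+n+l_2}=1)\geq \mathbb{P}_1(Z_{l_1}=k)\,\mathbb{P}_k(Z_n=j)\,\mathbb{P}_j(Z_{l_2}=1),\]
which gives $\limsup_n n^{-1}\log\mathbb{P}_k(Z_n=j)\le \varrho_0$. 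A symmetric chain $k\to 1\to 1\to j$ of length $l_3+n+l_4$ yields the matching lower bound, establishing the first equality.

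The last part splits into a trivial lower bound $\mathbb{P}_1(1\le Z_n\le k_n)\geq \mathbb{P}_1(Z_n=1)$ and an upper bound requiring the uniform quantitative estimate $\mathbb{P}_j(Z_1=1)\geq c\,\alpha^j$ for some $c>0$, $\alpha\in(0,1)$ and every $j\ge 1$. Under Assumption (A), choose $\alpha>0$ small enough that $E_\alpha:=\{f_1(\{0\})\ge\alpha,\,f_1(\{1\})\ge\alpha\}$ has positive probability; on $E_\alpha$, with $h_1(\{0\})\ge\gamma$ from (C), the quenched probability that exactly one of the $j$ individuals has a single offspring, all others have none, and no immigrant arrives is at least $\gamma\cdot j\cdot\alpha^j$, giving the claim. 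Combined with $\mathbb{P}_1(Z_n=j)\,\mathbb{P}_j(Z_1=1)\le\mathbb{P}_1(Z_{n+1}=1)$ and summing over $1\le j\le k_n$, one obtains $\mathbb{P}_1(1\le Z_n\le k_n)\le C\alpha^{-k_n}\mathbb{P}_1(Z_{n+1}=1)$; since $k_n/n\to 0$ implies $\alpha^{-k_n}=e^{o(n)}$, taking $n^{-1}\log$ closes the sandwich. The main obstacle is precisely this exponential-in-$j$ lower bound on $\mathbb{P}_j(Z_1=1)$, which sharpens the qualitative content of Remark~\ref{hutong} into a form robust enough to absorb a slowly growing $k_n$.
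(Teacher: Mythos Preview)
Your argument is correct and essentially identical to the paper's: the same subadditive/Fekete step for $\varrho_0$, the same chaining via Remark~\ref{hutong} for general $(k,j)$, and for the last part the same choice of a positive-probability event $\{f(\{0\})\ge\alpha,\ f(\{1\})\ge\alpha\}$ combined with Assumption~(C) to obtain the exponential lower bound $\mathbb{P}_j(Z_1=1)\ge c\,\alpha^{j}$. The paper phrases the final step as $\mathbb{P}_1(Z_n=1)\ge \mathbb{P}_1(1\le Z_{n-1}\le k_n)\cdot\min_{1\le j\le k_n}\mathbb{P}_j(Z_1=1)$, which avoids the harmless extra $k_n$ factor that your literal ``summing'' of the pointwise inequalities would introduce; either way $k_n\alpha^{-k_n}=e^{o(n)}$ and the conclusion follows.
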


\begin{proof}
	We use  similar methods as in \cite[Lemma 4.1]{2014ban}.
	 Note that $\mathbb{P}_k(Z_1=1)\geq \mathbb{P}(f(0)^{k-1}f(\{1\}))>0$ for all $k\geq 1$ under Assumption (A). 
	By Markov property, for all $m,n\geq 1$,
	\beqnn
	\mathbb{P}_1(Z_{n+m}=1)\geq\mathbb{P}_1(Z_n=1)\mathbb{P}_1(Z_m=1).
	\eeqnn
	Adding that $\mathbb{P}_1(Z_1=1)>0$, we obtain that the sequence $-\log\mathbb{P}_1(Z_n=1)$ is finite and subadditive. Then $\lim_{n\rightarrow\infty}\frac{1}{n}\log\mathbb{P}_1(Z_n=1)$ exists and belongs to $(-\infty,0]$.
  By Remark \ref{hutong}, for all $k,j\geq 1$, there exist $l,m\geq 0$ such that $\mathbb{P}_1(Z_l=j)>0$ and $\mathbb{P}_1(Z_m=k)>0$. Thus,
	\beqnn
	\mathbb{P}_k(Z_{n+l+1}=j)\geq\mathbb{P}_k(Z_1=1)\mathbb{P}_1(Z_n=1)\mathbb{P}_1(Z_l=j)
	\eeqnn
	and
	\beqnn
	\mathbb{P}_1(Z_{m+n+1}=1)\geq\mathbb{P}_1(Z_m=k)\mathbb{P}_k(Z_n=j)\mathbb{P}_j(Z_1=1).
	\eeqnn
	Using the logarithm of the expression and letting $n\rightarrow\infty$ yields the first result.
	
	For the second result of the Lemma, first note that $\mathbb{P}_1(Z_n=1)\leq\mathbb{P}_1(1\leq Z_n\leq k_n)$ for $n$ large enough. Thus, it is enough to prove that
	\beqlb\label{knlower}
	\lim_{n\rightarrow\infty}\frac{1}{n}\log\mathbb{P}_1(Z_n=1)\geq\limsup_{n\rightarrow\infty}\frac{1}{n}\log\mathbb{P}_1(1\leq Z_n\leq k_n).
	\eeqlb
	In fact, for $\epsilon>0$, define
	 \beqnn
	\mathcal{A}_\epsilon:=\{\mathbf{q}\in\Delta: H(\{0\})>\epsilon, F(\{0\})>\epsilon, F(\{1\})>\epsilon\}.
	\eeqnn
	According to Assumption (A), $\mathbb{P}(Q\in\mathcal{A}_\epsilon)>0$ if $\epsilon$ is chosen small enough. Thus, we obtain
	\beqnn
	\mathbb{P}_1(Z_n=1)&\geq & \mathbb{P}_1(1\leq Z_{n-1}\leq k_n)\min_{1\leq j\leq k_n}\mathbb{P}_j(Z_1=1)\crcr
	&\geq &\mathbb{P}_1(1\leq Z_{n-1}\leq k_n)\mathbb{P}(Q\in\mathcal{A}_\epsilon) \min_{1\leq j\leq k_n}\mathbb{E}[f(0)^{j-1}f(\{1\})h(0)|Q\in\mathcal{A}_\epsilon] \crcr
	&\geq &\mathbb{P}_1(1\leq Z_{n-1}\leq k_n)\mathbb{P}(Q\in\mathcal{A}_\epsilon)\epsilon^{k_n+1}.
	\eeqnn
	Thus,
	\beqnn
	\lim_{n\rightarrow\infty}\frac{1}{n}\log\mathbb{P}_1(Z_n=1)\geq\limsup_{n\rightarrow\infty}\bigg(\frac{1}{n}\log\mathbb{P}_1(1\leq Z_n\leq k_n)+\frac{k_n+1}{n}\log \epsilon+\frac1{n}\log\mathbb{P}(Q\in\mathcal{A}_\epsilon)\bigg) .
	\eeqnn
	Adding that $k_n=o(n)$ by assumption in Lemma yields (\ref{knlower}), which gives the second result of the Lemma.
\end{proof}

Now, we shall show that the limit in Lemma \ref{smallequal} is negative.

\begin{lemma}\label{<0}
	Assume that  \textbf{Assumptions (A) (B)} hold. Then for all $k,j\geq 1$,
	\beqnn
	 \lim_{n\rightarrow\infty}\frac{1}{n}\log\mathbb{P}_k(Z_n=j):=\varrho\in(-\infty,0).
	\eeqnn
\end{lemma}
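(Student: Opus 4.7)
The plan is to reduce the statement to exponential decay of $\mathbb{P}_1(1\le Z_n\le K)$ for some fixed integer $K\ge 1$. The first part of Lemma \ref{smallequal} shows that the limit exists and is independent of $k,j$, so it suffices to prove $\lim_n \frac{1}{n}\log \mathbb{P}_1(Z_n=1)<0$. Applying the second part of Lemma \ref{smallequal} to the constant sequence $k_n\equiv K$ (which satisfies $k_n/n\to 0$) reduces the problem, in turn, to the exponential bound
\beqnn
\mathbb{P}_1(1\le Z_n\le K)\le C e^{-\beta n},\qquad n\ge 1,
\eeqnn
for some $K\ge 1$ and some $\beta, C>0$.

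To establish such a bound I would mimic the proof of Lemma \ref{pzn=0}, but evaluate the quenched generating function at a point $s\in(0,1)$ rather than at $0$. Markov's inequality applied to $s^{Z_n}$ yields
\beqnn
\mathbb{P}_1(Z_n\le K\mid\xi)\le s^{-K}\,g_n(1,s)\le s^{-K}\prod_{i=1}^n h_i(f_{i,n}(s)).
\eeqnn
Every probability generating function $g$ on $[0,1]$ is convex with $g(1)=1$, hence satisfies the chord inequality $g(s)\le 1-(1-s)(1-g(0))$. Applied to $f_{i,n}$ at the prospective minima $i=\nu(j)\le n$ from Lemma \ref{nuj-estimation}, this gives $f_{\nu(j),n}(s)\le 1-d(1-s)$, and applied to $h_{\nu(j)}$ at the point $1-d(1-s)$ it gives
\beqnn
h_{\nu(j)}\bigl(1-d(1-s)\bigr)\le 1-d(1-s)\bigl(1-h_{\nu(j)}(0)\bigr).
\eeqnn
Under Assumption (D) we have $\mathbb{E}[1-h(0)]>0$, and therefore $\mathbb{E}[h(1-d(1-s))]<1$ for every $s\in[0,1)$.

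Decomposing the expectation exactly as in the proof of Lemma \ref{pzn=0}, with $1-d$ replaced throughout by $1-d(1-s)$, then produces
\beqnn
\mathbb{E}\Bigl[\prod_{i=1}^n h_i(f_{i,n}(s))\Bigr]\le \mathbb{P}\bigl(\sharp\{\nu(j)\le n\}<\epsilon n\bigr)+\bigl(\mathbb{E}[h(1-d(1-s))]\bigr)^{\epsilon n-1}\le C e^{-\beta n}
\eeqnn
for some $\beta,C>0$. Combined with the Markov bound this yields $\mathbb{P}_1(1\le Z_n\le K)\le s^{-K}Ce^{-\beta n}$, which is exponentially small for any fixed $K$ and $s\in(0,1)$, completing the reduction. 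Lemma \ref{smallequal} then transfers the strict negativity of the limit to every pair $k,j\ge 1$.

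The point I expect to be most delicate is the passage from the quenched product $\prod_{j:\,\nu(j)\le n}h_{\nu(j)}(1-d(1-s))$ to the annealed bound $(\mathbb{E}[h(1-d(1-s))])^{\epsilon n-1}$: the times $\nu(j)$ are measurable with respect to the whole sequence $(f_i)$, and the paper does not assume independence between $f_i$ and $h_i$ within a pair $Q_i$, so the factorization of the expectation along the indices $\nu(j)$ must be argued from the i.i.d.\ structure of $\{Q_i\}$. This is precisely the point already handled in the final step of the proof of Lemma \ref{pzn=0}, so I would invoke that argument rather than reproduce it.
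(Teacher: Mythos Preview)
Your proof is correct and takes a genuinely different, more self-contained route than the paper. The paper computes $\mathbb{P}_1(Z_n=1\mid\xi)$ by differentiating the quenched generating function $g_n(1,s)$ at $s=0$, which produces two terms:
\[
\mathbb{P}_1(Z_n=1\mid\xi)\le \prod_{i=1}^n f_i'(f_{i,n}(0))\;+\;\gamma^{-1}n\prod_{i=1}^n h_i(f_{i,n}(0)).
\]
The first term is exactly $\mathbb{P}_1(Z_n^0=1\mid\xi)$ for the associated BPRE without immigration and is controlled by invoking \cite[Lemma~4.2, Theorem~2.1, Proposition~2.2]{2014ban}; the second term is handled by the estimate \eqref{secpart} from Lemma~\ref{pzn=0}. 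Your Markov-inequality approach bypasses the branching term entirely: after dropping the factor $f_{0,n}(s)\le 1$ you bound only the immigration product $\prod_i h_i(f_{i,n}(s))$, and the chord inequality lets you run the argument of Lemma~\ref{pzn=0} with $1-d$ replaced by $1-d(1-s)$. This removes the dependence on the external results of \cite{2014ban} and is arguably the cleaner route. As you correctly anticipate, the one delicate step --- passing from the quenched product over the prospective minima $\nu(j)$ to the annealed bound $(\mathbb{E}[h(1-d(1-s))])^{\epsilon n-1}$ --- is exactly the step carried out in \eqref{secpart}, so the two proofs share the same technical crux. A minor remark: the second convexity bound on $h_{\nu(j)}$ is not strictly needed; monotonicity of $h$ and Assumption~(D) already give $\mathbb{E}[h(1-d(1-s))]<1$ directly.
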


\begin{proof}
	From Lemma \ref{smallequal} it is sufficient to prove the Lemma with $k=j=1$.
	 Our proof is based on the decomposition of $\mathbb{P}_1(Z_n=1|\xi)$. We need the following decomposition of $Z_n$:
	\beqnn
	Z_n=Z_n^0+\sum_{i=1}^nZ_{i,n}^0,
	\eeqnn
	where $Z_n^0$ is defined in (\ref{classic}) and for $i\geq 1$, if the environment is $\xi$, $Z_{i,n}^0$ denotes the classic branching process under environment $T^i\xi$ start with $Z_{i,i}^0=Y_i$ and $T$ is the shift translation on the environment: $T(\mathbf{Q}_1,\mathbf{Q}_2,\cdots)=(\mathbf{Q}_2,\mathbf{Q}_3,\cdots)$. More precisely, we have $\mathbb{P}(Z_{i,n}^0\in\cdot|\xi)=\mathbb{P}_{Y_i}(Z_{n-i}^0\in\cdot|T^i\xi)$. Note that given the environment $\xi$, $\{Z_{i,n}^0, 1\leq i\leq n\}$ are independent. Then consider the ancestor of the particle in generation $n$ we have
	\beqnn
	\mathbb{P}_1(Z_n=1|\xi)&=&\mathbb{P}_1(Z_n^0=1;Z_{i,n}^0=0,\forall i|\xi)+\sum_{i=1}^n\mathbb{P}(Z_n^0=0;Z_{i,n}^0=1;Z_{j,n}^0=0, \forall j\neq i|\xi)\crcr
	&\leq &\mathbb{P}_1(Z_n^0=1|\xi)+\sum_{i=1}^{[\frac{n}{2}]}\mathbb{P}(Z_{i,n}^0=1|\xi)+\sum_{i=[\frac{n}{2}]+1}^{n}\mathbb{P}(Z_{j,n}^0=0, \forall j\neq i|\xi)\crcr
	&\leq &\mathbb{P}_1(Z_n^0=1|\xi)+\sum_{i=1}^{[\frac{n}{2}]}\sum_{k=1}^\infty h_i(\{k\})\mathbb{P}_k(Z_{n-i}^0=1|T^i\xi)+\sum_{i=[\frac{n}{2}]+1}^{n}\mathbb{P}(Z_{j,n}^0=0, \forall j\leq \big[\frac{n}{2}\big]|\xi)\crcr
	&\leq & \mathbb{P}_1(Z_n^0=1|\xi)+\sum_{i=1}^{[\frac{n}{2}]}\sum_{k=1}^\infty h_i(\{k\})\mathbb{P}_k(Z_{n-i}^0=1|T^i\xi)+\frac{n}{2}\mathbb{P}(Z_{j,n}^0=0, \forall j\leq \big[\frac{n}{2}\big]|\xi).
	\eeqnn
	Taking expectation on both sides we have
	\beqlb\label{dec11}
	\mathbb{P}_1(Z_n=1)\leq \mathbb{P}_1(Z_n^0=1)+\sum_{i=1}^{[\frac{n}{2}]}\sum_{k=1}^\infty\mathbb{E}[h_i(\{k\})]\mathbb{P}_k(Z_{n-i}^0=1)+\frac{n}{2}\mathbb{E}\bigg[\prod_{i=1}^{[\frac{n}{2}]}h_i(f_{i,n}(0))\bigg].
	\eeqlb
 By \cite[Lemma 4.2, Theorem 2.1 and Propsition 2.2]{2014ban}, there exist constants $C, \beta>0$ such that for all $k\geq 1, n\geq 1$,
	\beqlb\label{firstpart}
	\mathbb{P}_k(Z_n^0=1)\leq Ce^{-\beta n}.
	\eeqlb
	On the other hand, by Lemma \ref{nuj-estimation} we have that there exists $0<\epsilon< \mathbb{E}[\nu(1)]^{-1}$ such that
	\beqnn
	\mathbb{P}(\sharp\{j\geq 0:\nu(j)\leq \big[\frac{n}{2}\big]\}<\epsilon \big[\frac{n}{2}\big])\leq e^{-\alpha [\frac{n}{2}]}.
	\eeqnn
	Thus, use the same method of (\ref{secpart}) we have
	\beqlb\label{secondpart}
	&&\mathbb{E}\bigg[\prod_{i=1}^{[\frac{n}{2}]}h_i(f_{i,n}(0))\bigg]\crcr
	&\leq & \mathbb{P}(\sharp\{j\geq 0:\nu(j)\leq \big[\frac{n}{2}\big]\}<\epsilon \big[\frac{n}{2}\big])+\mathbb{E}\bigg[\prod_{i=1}^{[\frac{n}{2}]}h_i(f_{i,n}(0));\sharp\{j\geq 0:\nu(j)\leq \big[\frac{n}{2}\big]\}\geq\epsilon \big[\frac{n}{2}\big]\bigg]\crcr
	&\leq & e^{-\alpha [\frac{n}{2}]}+(\mathbb{E}[h(1-d)])^{\epsilon [\frac{n}{2}]-1}\crcr
	&\leq & Ce^{-\beta n}
	\eeqlb
	with some constant $C,\beta>0$.
	Combining (\ref{dec11}), (\ref{firstpart}) and (\ref{secondpart}) we have
	\beqnn
	\mathbb{P}_1(Z_n=1)&\leq & Ce^{-\beta n}+C\sum_{i=1}^{[\frac{n}{2}]}e^{-\beta (n-i)}\sum_{k=1}^\infty\mathbb{E}[h_i(\{k\})]\crcr
	&\leq & Ce^{-\beta n}
	\eeqnn
	with some constant $C,\beta>0$, which yields the desired result. 
\end{proof}

  {\it Proof of Theorem \ref{small}.} It is a direct result of Lemmas \ref{smallequal} and \ref{<0}. \qed

  Based on Lemma \ref{<0}, we get the lower deviation of $Z_n$, where we use the  methods in \cite[Lemma 3.1]{2022dar}. 

\begin{lemma}\label{moment}
	Assume that   \textbf{Assumptions (B)}  hold. Then there are $\epsilon_0,\beta_0>0$ and $n_0\in\mathbb{N}$ such that \beqlb\label{st3}
	\mathbb{E}_j[Z_n^{-\epsilon};Z_1\geq n_0,\cdots,Z_n\geq n_0]\leq n_0e^{-\beta_0(n-1)},
	\eeqlb
	for any $0<\epsilon\leq\epsilon_0$, $n\geq 1$ and $j<n_0$.
\end{lemma}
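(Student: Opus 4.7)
The plan is to combine a one-step contraction with an induction on $n$. Set $\phi_n(k) := \mathbb{E}_k[Z_n^{-\epsilon}\mathbf{1}_{A_n}]$ where $A_n := \bigcap_{i=1}^n\{Z_i \ge n_0\}$. The i.i.d.\ structure of the environment and the Markov property at time $1$ give the recursion
$$\phi_n(k) = \mathbb{E}_k[\mathbf{1}_{Z_1 \ge n_0}\,\phi_{n-1}(Z_1)].$$

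The core ingredient is the following one-step estimate: for $\epsilon_0$ sufficiently small and $n_0$ sufficiently large there exists $\beta_0 > 0$ such that, for every $k \ge n_0$ and $\epsilon \in (0, \epsilon_0]$,
$$\mathbb{E}_k[(Z_1/k)^{-\epsilon};\, Z_1 \ge n_0] \le e^{-\beta_0}.$$
To prove it, I would write $Z_1 = \sum_{i=1}^{k} N_{1,i} + Y_1$ and apply the quenched strong law of large numbers, which yields $Z_1/k \to m_1$ almost surely as $k \to \infty$. Assumption~(B) forces $m_1 \ge 1-\delta > 0$ a.s., so $m_1^{-\epsilon}$ is uniformly bounded; combining this with $\mathbb{E}\log m_1 > 0$ and the observation that $\tfrac{d}{d\epsilon}\mathbb{E}[m_1^{-\epsilon}]\big|_{\epsilon=0} = -\mathbb{E}\log m_1$ shows $\mathbb{E}[m_1^{-\epsilon}] < 1$ for $\epsilon > 0$ small. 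To upgrade pointwise convergence into convergence of the expectation uniformly in $k$, I would invoke a quenched concentration inequality: since $\mathbb{P}(N_{1,i} = 0 \mid \xi) = f(\{0\}) < \delta$, the number of non-null offspring among $N_{1,1},\ldots,N_{1,k}$ stochastically dominates a $\mathrm{Binomial}(k, 1-\delta)$, so Chernoff's bound yields $\mathbb{P}(\sum_{i=1}^k N_{1,i} \le (1-\delta)k/2) \le e^{-ck}$ for a universal $c = c(\delta) > 0$. On the bad event thus produced, the crude bound $(Z_1/k)^{-\epsilon} \le (k/n_0)^{\epsilon}$ multiplied by $e^{-ck}$ gives a vanishing contribution; on the complement $(Z_1/k)^{-\epsilon}$ is uniformly bounded by $(2/(1-\delta))^{\epsilon}$ and dominated convergence applies.

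With the one-step estimate in hand, a straightforward induction on $n$ gives $\phi_n(k) \le k^{-\epsilon}\, e^{-\beta_0 n}$ for all $k \ge n_0$: the base case is the one-step estimate itself, and the inductive step plugs the hypothesis into the recursion and invokes the one-step estimate at $Z_1 \ge n_0$. For the target bound at $j < n_0$, one more application of the recursion together with the trivial inequality $Z_1^{-\epsilon} \le n_0^{-\epsilon} \le 1$ on $\{Z_1 \ge n_0\}$ gives
$$\phi_n(j) \le e^{-\beta_0(n-1)}\,\mathbb{E}_j[Z_1^{-\epsilon};\, Z_1 \ge n_0] \le n_0^{-\epsilon}\, e^{-\beta_0(n-1)} \le n_0\, e^{-\beta_0(n-1)}.$$
The main obstacle is the one-step bound, and within it the balance between the polynomial blow-up $(k/n_0)^{\epsilon}$ and the exponential decay $e^{-ck}$ on the concentration bad set: this forces $\epsilon_0$ to be small and $n_0$ to be large, but the universality of the Chernoff constant $c=c(\delta)$ furnished by Assumption~(B) guarantees that the balance can be achieved. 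The immigration term $Y_1$ plays no adverse role since it only enlarges $Z_1$, and Assumptions~(A), (C), (D) are not needed for this particular step.
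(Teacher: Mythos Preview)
Your proposal is correct and follows essentially the same approach as the paper: a one-step contraction $\mathbb{E}_k[(Z_1/k)^{-\epsilon};Z_1\ge n_0]\le e^{-\beta_0}$ for $k\ge n_0$, proved via a Chernoff/large-deviation bound on the bad set together with SLLN on the good set, and then an induction on $n$ using the Markov property. Your base-case handling for $j<n_0$ (simply using $Z_1^{-\epsilon}\le n_0^{-\epsilon}$ on $\{Z_1\ge n_0\}$) is in fact a slight streamlining of the paper's argument, and your restriction of the one-step estimate to $\{Z_1\ge n_0\}$ rather than the paper's $\{Z_1\ge 1\}$ is harmless since that is all the induction requires.
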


\begin{proof}
	Step 1. We are going to prove that there are $\epsilon_0,\beta_0>0$ and $n_0\in\mathbb{N}$ such that
	\beqlb\label{st2}
	\mathbb{E}_n\bigg[\bigg(\frac{Z_1}{n}\bigg)^{-\epsilon};Z_1\geq 1\bigg]\leq e^{-\beta_0}
	\eeqlb
	for all $n\geq n_0$ and $0<\epsilon\leq\epsilon_0$. Fix $0<\sigma<\min\{1-\delta,1/2\}$. Denote
	 \beqnn
	B_n=\{\mbox{at most }\lfloor\sigma n\rfloor\mbox{ among }N_{1,1},\cdots,N_{1,n}\mbox{ are not equal to }0\}.
	\eeqnn
	From the proof of \cite[Lemma 3.1]{2022dar} we have $\mathbb{P}(B_n|\xi)\leq Ce^{-\beta n}$ for some $\beta>0$ and $C>0$. Thus, we have for every $\epsilon>0$, $n\geq 1$,
	\beqlb\label{st1}
	\mathbb{E}_n\bigg[\bigg(\frac{Z_1}{n}\bigg)^{-\epsilon}\mathbf{1}_{\{Z_1\geq 1\}}\mathbf{1}_{B_n}\bigg|\xi\bigg]\leq C_1e^{-\beta_1 n}, \quad \mathbb{P}-a.s.
	\eeqlb
Note that
	  \beqnn
	\bigg\{\sum_{i=1}^n N_{1,i}+Y_1\leq \sigma n\bigg\}\subset B_n.
	\eeqnn
	Thus, by (\ref{st1}) we have
	\beqlb\label{<sigman}
	\mathbb{E}_n\bigg[\bigg(\frac{Z_1}{n}\bigg)^{-\epsilon};1\leq Z_1\leq\sigma n\bigg]\leq C_1e^{-\beta_1 n}
	\eeqlb
	for all $n\geq 1$.  By Fatou lemma,
	\beqnn
	\limsup_{n\rightarrow\infty}\mathbb{E}_n\bigg[\bigg(\frac{Z_1}{n}\bigg)^{-\epsilon};Z_1\geq \sigma n\bigg]\leq \mathbb{E}\bigg[\mathbb{E}\bigg(\limsup_{n\rightarrow\infty}\bigg(\frac{1}{n}\sum_{i=1}^n N_{1,i}+\frac{Y_1}{n}\bigg)^{-\epsilon};\sum_{i=1}^n N_{1,i}+Y_1\geq \sigma n\bigg|\xi\bigg)\bigg].
	\eeqnn
	Note that by the strong law of large numbers, given $\xi$,
	\beqnn
	\lim_{n\rightarrow\infty}\bigg(\frac{1}{n}\sum_{i=1}^n N_{1,i}+\frac{Y_1}{n}\bigg)^{-\epsilon}\mathbf{1}_{\{\sum_{i=1}^n N_{1,i}+Y_1\geq \sigma n\}}=m_1^{-\epsilon},\quad \mathbb{P}(\cdot|\xi)-a.s.,
	\eeqnn
 	By Assumption (B), we have $1-\delta\leq m_1,\mathbb{P}$-a.s. and $\mathbb{E}m_1^{-\epsilon}<\infty$. Thus
	\beqlb\label{>sigman}
	\limsup_{n\rightarrow\infty}\mathbb{E}_n\bigg[\bigg(\frac{Z_1}{n}\bigg)^{-\epsilon};Z_1\geq \sigma n\bigg]\leq \mathbb{E}m_1^{-\epsilon}<\infty
	\eeqlb
 for all $\epsilon>0$. Moreover, by the assumption $\mathbb{E}\log m_1>0$,    $$\frac{\mathrm{d}(\mathbb{E}m_1^t)}{\mathrm{d}t}\bigg|_{t=0}=\mathbb{E}\log m_1>0.$$  Then there exists $\epsilon_0$ such that for $0<\epsilon\leq\epsilon_0$,
	\beqnn
	\mathbb{E}m_1^{-\epsilon}<1.
	\eeqnn
  Collecting (\ref{<sigman}) and (\ref{>sigman}), we conclude that there exists $n_0$ such that (\ref{st2}) holds for $n\ge n_0$.
	
	Step 2. Let $\beta_0,\epsilon_0,n_0$ be as in Step 1. We now prove (\ref{st3}) by induction.
	For $n=1$ and $j<n_0$ we have
	\beqnn
	\mathbb{E}_j[Z_1^{-\epsilon};Z_1\geq n_0]&=&\mathbb{E}\bigg[\bigg(\sum_{i=1}^j N_{1,i}+Y_1\bigg)^{-\epsilon}\mathbf{1}_{\{\sum_{i=1}^j N_{1,i}+Y_1\geq n_0\}}\bigg]\crcr
	&\leq & \mathbb{E}\bigg[\bigg(\sum_{i=1}^j N_{1,i}+Y_1\bigg)^{-\epsilon}\cdot\bigg(\sum_{i=1}^j \mathbf{1}_{\{N_{1,i}\geq 1\}}+\mathbf{1}_{\{Y_1\geq 1\}}\bigg)\bigg]\crcr
	&\leq &\sum_{i=1}^j\mathbb{E}[N_{1,i}^{-\epsilon};N_{1,i}\geq 1]+\mathbb{E}[Y_1^{-\epsilon};Y_1\geq 1]\leq n_0.
	\eeqnn
	For arbitrary $n$, using (\ref{st2}) and the induction hypothesis, we obtain
	\beqnn
	&&\mathbb{E}_j[Z_n^{-\epsilon};Z_1\geq n_0,\cdots,Z_n\geq n_0]\crcr
	&&=\mathbb{E}_j\bigg[Z_{n-1}^{-\epsilon}\mathbb{E}_{Z_{n-1}}\bigg[\bigg(\frac{\sum_{i=1}^{Z_{n-1}} N_{n,i}+Y_n}{Z_{n-1}}\bigg)^{-\epsilon};Z_n\geq n_0\bigg];Z_1\geq n_0,\cdots,Z_{n-1}\geq n_0\bigg]\crcr
	&&\leq \mathbb{E}_j[Z_{n-1}^{-\epsilon}e^{-\beta_0};Z_1\geq n_0,\cdots,Z_{n-1}\geq n_0]\crcr
	&&\leq n_0e^{-\beta_0(n-1)},
	\eeqnn
 completing the proof of  (\ref{st3}).
\end{proof}

\textbf{Proof of Theorem \ref{lowerdeviation}:}

	

    Let
	\beqnn
	\tau_n:=\inf\{i\leq n: Z_{i+1}\geq n_0,\cdots,Z_n\geq n_0\},
	\eeqnn
We have
	\beqlb\label{fenjie}
	\mathbb{P}_k(1\leq Z_n\leq e^{\theta n})
	&\leq &\sum_{i=1}^{n-1}\mathbb{P}_k(1\leq Z_n\leq e^{\theta n},\tau_n=i)+\mathbb{P}_k(1\leq Z_n\leq n_0)\crcr
	&\leq &\sum_{i=1}^{n-1}\mathbb{P}_k(1\leq Z_n\leq e^{\theta n},\tau_n=i)+C_1e^{-\beta_1 n}
	\eeqlb
	with some $C_1,\beta_1>0$, where we use Theorem \ref{small} in the last inequality. Now we focus on the first part of (\ref{fenjie}). If $i> n/2$, then using Theorem \ref{small}  again and Lemma \ref{pzn=0} we have
	\beqlb\label{>n/2}
	\mathbb{P}_k(1\leq Z_n\leq e^{\theta n},\tau_n=i)\leq \mathbb{P}_k(Z_i=0)+\mathbb{P}_k(1\leq Z_i<n_0)\leq C_1e^{-\beta_1 i}\leq C_2e^{-\beta_2 n}
	\eeqlb 
	with some $C_2,\beta_2>0$.
	
	For $i\leq n/2$, let $\beta_0,\epsilon_0,n_0$ be as in Lemma \ref{moment}. For $0<\epsilon\leq \epsilon_0$,
	choosing $\theta<\min\{\beta_0/\epsilon,\mathbb{E}\log m_1\}$.
	Using (\ref{st3}) and Markov inequality we have for any $j<n_0$,
	\beqlb\label{st4}
	\mathbb{P}_j[Z_n\leq e^{\theta n},Z_1\geq n_0,\cdots,Z_n\geq n_0]&=&\mathbb{P}_j[Z_n^{-\epsilon} \geq e^{-\epsilon \theta n},Z_1\geq n_0,\cdots,Z_n\geq n_0]\crcr
	&\leq & e^{\epsilon\theta n}\mathbb{E}_j[Z_n^{-\epsilon};Z_1\geq n_0,\cdots,Z_n\geq n_0]\crcr
	&\leq & e^{\epsilon\theta n}n_0e^{-\beta_0(n-1)}=n_0e^{\epsilon\theta}e^{(\epsilon\theta-\beta_0)(n-1)}\crcr
	&\leq &C_3n_0e^{-\beta_3 n}
	\eeqlb
	for some $C_3,\beta_3>0$.
	By Markov property we obtain
	  \beqlb\label{<n/2}
	\mathbb{P}_k(1\leq Z_n\leq e^{\theta n},\tau_n=i)
	&\leq &\mathbb{P}_k(0\leq Z_i<n_0)\sup_{j< n_0}\mathbb{P}_j(1\leq Z_{n-i}\leq e^{\theta n},Z_1\geq n_0,\cdots,Z_{n-i}\geq n_0)\crcr
	&\leq &C_3n_0e^{-\beta_3 n/2},
	\eeqlb 
	where we use (\ref{st4}) in the last inequality.  Collecting (\ref{fenjie}), (\ref{>n/2}) and (\ref{<n/2}), we obtain the result.
\qed

\section{Limit theorems for $\log Z_n$}

\subsection{Main results}
Now we focus on the limit theorems of $\log Z_n$. We need the following assumption in this section:

  \noindent\textbf{Assumption (C)} \quad There are $q>1$, $p\in(1,2]$ such that
\beqnn
\mathbb{E}\bigg[(1+|\log m_{1}|^q)\bigg(\bigg(\frac{N_{1,1}}{m_{1}}\bigg)^p+1\bigg)\bigg]<\infty \quad \mbox{and}\quad \mathbb{E}[Y_1^{p}]<\infty.
\eeqnn

Let $\lambda(\cdot)$ be the characteristic function of   $\log m_1$, i.e.,
\beqlb\label{lambda}
\lambda(s):=\mathbb{E}[e^{\mathrm{i}s\log m_1}]=\mathbb{E}[m_1^{\mathrm{i}s}].
\eeqlb
Denote $\mu:=\mathbb{E}\log m_1$ and $\sigma:=Var(\log m_1)=-\lambda''(0)+\lambda'(0)^2.$ 

 Observing that from Lemma \ref{pzn=0}, $\mathbb{P}_k(Z_n>0)\rightarrow 1$ as $n\rightarrow\infty$. We give the central limit theorem of $\log Z_n$ as follows:


\begin{theorem}\label{CLT}
 	  Assume that $\mathbb{E}(\log m_1)^2<\infty$, $\sigma>0$  and \textbf{Assumptions (A)--(C)} hold. Then for all $k\geq 1$,
 	\beqlb
 	\mathbb{P}_k\bigg(\frac{\log Z_n-n\mu}{\sqrt{n}\sigma}\leq x\bigg|Z_n>0\bigg)\rightarrow \Phi(x), \quad \forall x\in\mathbb{R},
 	\eeqlb
 	where
 	\beqnn
 	\Phi(x):=\frac{1}{\sqrt{2\pi}}\int_{-\infty}^x e^{-y^2/2}dy,\quad x\in\mathbb{R}.
 	\eeqnn
 \end{theorem}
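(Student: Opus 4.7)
The approach is Fourier-analytic, adapting the framework of \cite{2022dar} from classical BPRE to BPIRE while also permitting $f(0) > 0$. Set $R_n := \log Z_n - S_n$, defined on $\{Z_n > 0\}$; on the event $G_n := \{Z_i \geq 1 \text{ for all } 1 \leq i \leq n\}$ one has the representation
$$R_n = \log Z_0 + \sum_{i=1}^{n}\log\frac{Z_i}{m_i Z_{i-1}}.$$
Since $\mathbb{P}_k(Z_n > 0) \to 1$ by Lemma \ref{pzn=0} and $(S_n - n\mu)/(\sqrt{n}\sigma)$ converges in distribution to $\mathcal{N}(0,1)$ by the classical iid CLT, Slutsky's theorem reduces the problem to showing $R_n / \sqrt{n} \to 0$ in $\mathbb{P}_k(\cdot \mid Z_n > 0)$-probability.

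To analyze the per-step increment I would use the identity
$$\frac{Z_i - m_i Z_{i-1}}{m_i Z_{i-1}} = \frac{1}{m_i Z_{i-1}}\sum_{j=1}^{Z_{i-1}}(N_{i,j} - m_i) + \frac{Y_i}{m_i Z_{i-1}}.$$
The von Bahr--Esseen inequality combined with Assumption (E) gives $\mathbb{E}[|m_i^{-1}Z_{i-1}^{-1}\sum_j (N_{i,j}-m_i)|^p \mid \xi, Z_{i-1}] \leq C Z_{i-1}^{1-p}$, while the immigration term has conditional $L^p$-norm bounded by $C Z_{i-1}^{-1}$ through the finiteness of $\mathbb{E}|Y_1/m_1|^p$. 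Using $|\log(1+x)| \leq C(|x| + |x|^p)$ for $x > -1/2$ together with a crude bound $|\log(Z_i/(m_iZ_{i-1}))| \leq C(1 + |\log Z_i| + |\log m_i|)$ when the argument of the logarithm approaches $-1$, one then obtains that $\mathbb{E}_k[|\log(Z_i/(m_iZ_{i-1}))|^p; Z_{i-1} \geq n_0]$ is controlled by the harmonic moments $\mathbb{E}_k[Z_{i-1}^{-(p-1)}; Z_{i-1} \geq n_0]$, which are exponentially small in $i$ by Lemma \ref{moment}.

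For the complementary event where $Z_i < n_0$ at some intermediate $i$, I would introduce $H_n := \{Z_i \geq e^{\theta i/2} \text{ for all } \lfloor \epsilon n \rfloor \leq i \leq n\}$ for suitable $\theta, \epsilon > 0$; by Theorem \ref{lowerdeviation} and the Markov property, $\mathbb{P}_k(H_n^c \cap \{Z_n > 0\}) \to 0$. On $H_n$ the summable bound of the previous paragraph gives $R_n - R_{\lfloor \epsilon n \rfloor} = O_{\mathbb{P}_k}(1)$, while for the initial segment one uses $\mathbb{E}_k[(\log Z_{\lfloor \epsilon n \rfloor})^q] = O((\epsilon n)^q)$, a consequence of the recursion together with Assumption (E) (crucially including the moment condition on $\log Y_1$), and chooses $\epsilon$ sufficiently small so that $R_{\lfloor \epsilon n \rfloor}/\sqrt{n} \to 0$.

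The main obstacle is the interplay between intermediate extinction and immigration-driven revival, a phenomenon absent from \cite{2022dar}: when $Z_{i-1} = 0$ the logarithmic increment is ill-defined, and a revival $Z_i = Y_i$ contributes an \emph{additive} jump of order $\log Y_i$ to $\log Z_n$ that is not individually small. The resolution combines Theorem \ref{lowerdeviation} (intermediate-extinction paths are exponentially rare) with Assumption (E)'s integrability of $(\log Y_1)^{q-1}$ (which controls the aggregate size of the immigration restarts on the bad event), so that both contributions are $o(\sqrt{n})$ in probability and the CLT follows.
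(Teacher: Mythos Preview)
Your outline is not the Fourier-analytic argument of \cite{2022dar} that the paper adopts; despite the label, what you describe is a direct Slutsky reduction. The paper instead proves (Lemma~\ref{key}) that $\phi_{k,n}(s)=\mathbb{E}_k[Z_n^{\mathrm{i}s};Z_n>0]/\lambda(s)^n$ converges uniformly on a neighborhood of $0$ to a continuous $\phi$ with $\phi(0)=1$, by establishing $|\phi_{k,n+1}(s)-\phi_{k,n}(s)|\leq Ce^{-\beta n}$. The one-step estimate uses Lemma~\ref{logdeltan} on $\{Z_{n+1}>0,\,Z_n>0\}$ and disposes of the intermediate-extinction cross terms via Lemmas~\ref{pzn=0} and~\ref{logzn}. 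The CLT is then read off from
\[
\mathbb{E}_k\big[e^{\mathrm{i}s(\log Z_n-n\mu)/(\sigma\sqrt{n})};Z_n>0\big]
=\phi_{k,n}\Big(\tfrac{s}{\sigma\sqrt{n}}\Big)\,\lambda\Big(\tfrac{s}{\sigma\sqrt{n}}\Big)^{n}e^{-\mathrm{i}s\sqrt{n}\mu/\sigma}\longrightarrow 1\cdot e^{-s^2/2}.
\]
This route never splits the trajectory into an initial and a late segment, and it delivers as a by-product the $C^{K}$ regularity of $\phi$ required for Theorems~\ref{edgeworth} and~\ref{renewal}.

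Your Slutsky scheme has a genuine scaling gap in the initial-segment step. From $\mathbb{E}_k[(\log Z_{\lfloor\epsilon n\rfloor})^q]=O((\epsilon n)^q)$ together with the analogous bound for $S_{\lfloor\epsilon n\rfloor}$ you obtain only $R_{\lfloor\epsilon n\rfloor}=O_{\mathbb P_k}(\epsilon n)$, hence $R_{\lfloor\epsilon n\rfloor}/\sqrt{n}=O_{\mathbb P_k}(\epsilon\sqrt{n})\to\infty$ for every fixed $\epsilon>0$; ``choosing $\epsilon$ sufficiently small'' cannot force this to vanish. The argument is repairable---take $\epsilon=\epsilon_n$ with $n^{-1}\ll\epsilon_n\ll n^{-1/2}$ so that both $\mathbb{P}_k(H_n^c)\leq Ce^{-\beta\epsilon_n n}\to 0$ and $\epsilon_n n/\sqrt{n}\to 0$, or, more cleanly, show that the last time $\tau$ with $Z_\tau<n_0$ is tight (Theorem~\ref{small} and Lemma~\ref{pzn=0} give $\mathbb{P}_k(\tau>T)\leq\sum_{i>T}\sum_{j<n_0}\mathbb{P}_k(Z_i=j)\leq C'e^{-\beta T}$), after which your summable estimate yields $R_n-R_\tau=O_{\mathbb P_k}(1)$. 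A related imprecision: $\mathbb{P}_k(G_n^c)=\mathbb{P}_k(\exists\, i\leq n:Z_i=0)$ is bounded by a convergent geometric series but does \emph{not} tend to $0$; what is exponentially small is the probability that the \emph{last} visit to $0$ occurs after a given time, and that is the statement your revival paragraph actually needs.
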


 For classical BPRE $\mathbf{Z^0}$, Buraczewski and Damek \cite{2022dar} proved the central limit theorem of $\log Z_n^0$ conditionally on the survival set,  with the hypothesis $\mathbb{P}(Z_1^0=0)=0$. 

 Next result gives  the Edgeworth expansion of the BPIRE $\mathbf{Z}$:
 \begin{theorem}\label{edgeworth}

 	Suppose that $\log m_1$ is nonlattice,   \textbf{Assumptions (A) (B)} hold and \textbf{Assumption (C)} is satisfied with $q\geq 4$. 
  Let $r $ be a positive integer. If   $r=3$, or $4\le r\leq q-1$ and
 	  \beqlb\label{wuqiong}
 	\limsup_{|s|\rightarrow\infty}|\mathbb{E}m_1^{\mathrm{i}s}|<1,
 	\eeqlb 
 	then
  \beqnn
 	\mathbb{P}\bigg(\frac{\log Z_n-n\mu}{\sqrt{n}\sigma}\leq x\bigg|Z_n>0\bigg)=G_r(x)+o(n^{-r/2+1}),\qquad n\to \infty.
 	\eeqnn 
 	where
 	\beqnn
 	G_r(x)=\Phi(x)-\varphi(x)\sum_{k=3}^r n^{-k/2+1}P_k(x),
 	\eeqnn
  $\varphi(x):=\frac{1}{\sqrt{2\pi}}e^{-x^2/2} $, $\Phi$ is defined as in Theorem \ref{CLT}, $P_k$ is a polynomial of order $k-1$ independent of $n$ and $r$ and $o(n^{-r/2+1})$ denotes a function of order smaller than $n^{-r/2+1}$ uniformly with respect to $x$.
 \end{theorem}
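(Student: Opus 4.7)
\textbf{Proof plan for Theorem \ref{edgeworth}.}

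The plan is to adapt the Fourier-transform approach of \cite{2022dar} to the immigration setting, following the strategy described in the introduction. Introduce the associated random walk $S_n = \sum_{i=1}^{n}\log m_i$ and the normalized process $W_n := Z_n e^{-S_n}$, so that
\[
\log Z_n = S_n + \log W_n.
\]
Since $S_n$ is an i.i.d.\ sum whose marginal is nonlattice (resp.\ satisfies Cram\'er's condition in the form \reff{wuqiong}), the classical Edgeworth expansion of order $r$ for $(S_n - n\mu)/(\sigma\sqrt{n})$ is at our disposal. The task reduces to showing that $\log W_n$ contributes only remainder terms of order $o(n^{-r/2+1})$.

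First I would prove that $\log W_n$ is well-behaved on $\{Z_n > 0\}$, whose probability tends to $1$ by Lemma \ref{pzn=0}. From
\[
W_{n+1} - W_n = e^{-S_{n+1}}\Bigl(\sum_{i=1}^{Z_n}(N_{n+1,i} - m_{n+1}) + Y_{n+1}\Bigr),
\]
the von Bahr--Esseen inequality applied with the exponent $p \in (1,2]$ from Assumption (E), together with the harmonic moment bound for $Z_n$ coming from Lemma \ref{moment} and Theorem \ref{lowerdeviation}, should give $L^p$ convergence of $W_n$ to a positive finite limit $W_\infty$. Using in addition the lower-deviation estimate of Theorem \ref{lowerdeviation} to absorb the event $\{1 \leq Z_n \leq e^{\theta n}\}$ and the logarithmic immigration tail assumption $\mathbb{E}[(\log Y_1)^{q-1}; Y_1 > 0] < \infty$, this translates into a uniform bound
\[
\sup_{n}\mathbb{E}_k\bigl[|\log W_n|^{q-1}\mathbf{1}_{\{Z_n > 0\}}\bigr] < \infty,
\]
strong enough to justify a Taylor expansion of order $q-2$.

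Second I would analyse the characteristic function
\[
\varphi_n(t) := \mathbb{E}_k\Bigl[e^{\mathrm{i}t(\log Z_n - n\mu)/(\sigma\sqrt{n})}\mathbf{1}_{\{Z_n > 0\}}\Bigr].
\]
Expanding $e^{\mathrm{i}t \log W_n/(\sigma\sqrt{n})}$ up to order $r-2$ and using the moment bound on $\log W_n$ to absorb the Taylor remainder, $\varphi_n(t)$ reduces to a linear combination, with coefficients converging to moments of $\log W_\infty$, of the characteristic function of $(S_n - n\mu)/(\sigma\sqrt{n})$ and its variants weighted by powers of $\log W_n$. Substituting the classical Edgeworth expansion for $S_n$ and invoking Esseen's smoothing inequality on an interval $|t| \leq T n^{(r-1)/2}$ should then deliver $G_r(x)$ with uniform remainder $o(n^{-r/2+1})$.

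The main obstacle will be the Fourier bound for moderately large $|t|$. For $r = 3$ the nonlattice hypothesis together with the moment control on $\log W_n$ suffices, but for $r \geq 4$ the Cram\'er condition \reff{wuqiong} becomes indispensable: it yields exponential decay of $|\lambda(s)|^n$ uniformly in $s$ bounded away from $0$, which is what makes the Esseen smoothing integral converge at the required rate. A second delicate point is that $S_n$ and $W_n$ are highly dependent; the expansion of $\varphi_n$ therefore has to be performed conditionally on the environment $\xi$, so that the branching, immigration and environmental fluctuations can be disentangled before the annealed expectation is finally taken.
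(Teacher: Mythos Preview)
Your decomposition $\log Z_n = S_n + \log W_n$ with $W_n=Z_ne^{-S_n}$ is natural, but the plan has a genuine gap in the handling of the dependence between $S_n$ and $W_n$. After Taylor-expanding $e^{\mathrm{i}t\log W_n/(\sigma\sqrt n)}$, the terms you must control are of the form
\[
\mathbb{E}_k\bigl[e^{\mathrm{i}t(S_n-n\mu)/(\sigma\sqrt n)}(\log W_n)^j;\,Z_n>0\bigr],
\]
and these are \emph{not} governed by the classical Edgeworth expansion for $S_n$: the factor $(\log W_n)^j$ depends on the same environment sequence $\xi$ that drives $S_n$, so the expectation does not factor and you cannot ``substitute the classical Edgeworth expansion for $S_n$'' as written. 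Your proposed remedy of conditioning on $\xi$ does not help: under the quenched law $S_n$ is deterministic, so there is no Edgeworth expansion left to invoke, while the quenched moments $\mathbb{E}[(\log W_n)^j\mid\xi]$ are random functionals of the whole environment and remain fully correlated with $S_n$ once you take the annealed expectation. A secondary issue is that the uniform bound $\sup_n\mathbb{E}_k[|\log W_n|^{q-1};Z_n>0]<\infty$ is not an immediate consequence of $L^p$-convergence of $W_n$ plus Theorem~\ref{lowerdeviation}: on $\{Z_n>e^{\theta n}\}$ you still need to control $\log Z_n-S_n$ from both sides, and small values of $W_n$ (not just of $Z_n$) must be ruled out.

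The paper avoids the dependence problem by never attempting to separate $S_n$ from $W_n$. It works directly with the ratio
\[
\phi_{k,n}(s)=\frac{\mathbb{E}_k[Z_n^{\mathrm{i}s};Z_n>0]}{\lambda(s)^n},
\]
and the key technical input is Lemma~\ref{key}: $\phi_{k,n}$ converges in $C^K(I_\eta)$, $K=\lfloor q-1\rfloor$, to a fixed function $\phi$ at an exponential rate in $n$. This is proved not through $W_n$ but through the one-step perturbations $\Delta_n=Z_{n+1}/(m_{n+1}Z_n)$, via Lemmas~\ref{pzn=0}, \ref{logzn}, \ref{deviation} and \ref{logdeltan}. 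One then has the exact factorization $\mathbb{E}_k[Z_n^{\mathrm{i}s};Z_n>0]=\phi_{k,n}(s)\lambda(s)^n$ with $\phi_{k,n}\approx\phi$ smooth and $n$-independent; the entire coupling between branching/immigration and environment is absorbed into $\phi$. The Edgeworth expansion follows from the classical one for $\lambda(s)^n$ combined with a Taylor expansion of $\phi$ near $0$, together with the estimates \eqref{n-13M}--\eqref{kngamma} on $|\mathbb{E}_k[Z_n^{\mathrm{i}s};Z_n>0]|$ for $s$ bounded away from $0$ (this is where \eqref{wuqiong} enters for $r\ge4$), exactly as in \cite{2022dar} after replacing $\mathbb{E}_{\mathcal S}$ there by $\mathbb{E}[\,\cdot\,;Z_n>0]$.
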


  The following is a renewal theorem for $\log Z_n$:
 \begin{theorem}\label{renewal}
 Assume that   $\log m_1$ is nonlattice and \textbf{Assumptions (A)--(C)} hold. Then for all  $k\ge 1$ and  $0\leq B<C<\infty$,
 \beqnn
 \lim_{y\rightarrow\infty}\mathbb{E}_k\bigg(\sharp\{n:\log Z_n\in y+[B,C]\}\bigg)= \lim_{y\rightarrow\infty}\mathbb{E}_k\bigg(\sharp\{n:e^B y\leq Z_n\leq e^C y\}\bigg)=\frac{1}{\mu}(C-B).
 \eeqnn
 	
 \end{theorem}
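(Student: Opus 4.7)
The plan is to reduce the renewal theorem for $\log Z_n$ to the classical Blackwell renewal theorem for the associated random walk $S_n=\sum_{i=1}^{n}\log m_i$. Writing
\[
\log Z_n = S_n + R_n, \qquad R_n := \log(Z_n/\Pi_n), \quad \Pi_n := e^{S_n},
\]
the strategy is to show that the perturbation $R_n$ converges almost surely to a finite, strictly positive limit $R_\infty=\log W_\infty$, and then to transfer Blackwell's theorem for $S_n$ (valid since $\log m$ is nonlattice with $\mathbb{E}\log m=\mu>0$) to the perturbed walk $S_n+R_n$ via a sandwich.

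For the almost-sure convergence of $W_n:=Z_n/\Pi_n$, I would use the immigration-cohort decomposition $Z_n=\sum_{k=0}^{n} Z_n^{(k)}$, where $Z_n^{(k)}$ is the generation-$n$ descendants of the immigrants arriving at time $k$ (with $Y_0:=Z_0$). Given $\xi$ and $Y_k$, each $Z_n^{(k)}/\Pi_{k,n}$ with $\Pi_{k,n}:=\prod_{i=k+1}^{n}m_i$ is a nonnegative quenched martingale in $n$ converging a.s.\ to some $W_\infty^{(k)}$. Hence
\[
W_n = \sum_{k=0}^{n}\frac{W_n^{(k)}}{\Pi_k}\longrightarrow W_\infty := \sum_{k=0}^{\infty}\frac{W_\infty^{(k)}}{\Pi_k}\quad\text{a.s.,}
\]
where the series converges because $\Pi_k$ grows exponentially and the moments of $Y_k$ are controlled by Assumption~(E). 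Strict positivity $W_\infty>0$ a.s.\ follows from Assumption~(D) (infinitely many nonzero immigration cohorts occur a.s.), the conditional independence of $\{W_\infty^{(k)}>0\}$ across $k$ given $\xi$, and the positive survival probability of each cohort under Assumptions~(A),(B); alternatively, Theorem~\ref{lowerdeviation} combined with Borel--Cantelli gives $Z_n\ge e^{\theta n}$ for all large $n$, ruling out $W_\infty=0$.

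With $R_n\to R_\infty$ almost surely, for any $\epsilon>0$ there is an a.s.\ finite random time $N(\epsilon)$ with $|R_n-R_\infty|<\epsilon$ for all $n\ge N(\epsilon)$. Then
\[
\sharp\{n:\log Z_n\in y+[B,C]\} \le N(\epsilon) + \sharp\{n\ge N(\epsilon):S_n\in y-R_\infty+[B-\epsilon,C+\epsilon]\},
\]
together with a matching lower bound (replace $[B-\epsilon,C+\epsilon]$ by $[B+\epsilon,C-\epsilon]$ and drop the $N(\epsilon)$ term). Conditioning on $(R_\infty,N(\epsilon))$ and applying Blackwell's renewal theorem to $S_n$ with shift $y-R_\infty$, the expectations of both bounds converge to $(C-B\pm 2\epsilon)/\mu$ as $y\to\infty$; letting $\epsilon\downarrow 0$ yields the claim. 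The equivalent formulation with $\{e^B y\le Z_n\le e^C y\}$ follows by substituting $y\mapsto\log y$.

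The main obstacle is justifying the interchange of limit and expectation. Each indicator $\mathbf{1}_{\{\log Z_n\in y+[B,C]\}}$ vanishes pointwise as $y\to\infty$, but the sum over $n$ requires a uniform dominator. For this I would combine the CLT (Theorem~\ref{CLT}), which concentrates contributing indices in an $O(\sqrt{n})$ window around $n\approx y/\mu$, with quantitative moment bounds on $W_n-W_\infty$ from Assumption~(E) to obtain a uniform tail estimate on $N(\epsilon)$. The lower deviation bound (Theorem~\ref{lowerdeviation}) additionally controls the probability that $\log Z_n$ is substantially smaller than $S_n$, which is essential for making the dominator integrable and the sum summable.
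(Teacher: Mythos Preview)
Your sandwich argument has a structural gap that is more serious than the dominated-convergence issue you flag at the end. The limit $R_\infty=\log W_\infty$ is a functional of the \emph{entire} environment sequence $(m_1,m_2,\ldots)$ (together with the branching variables), so once you condition on $R_\infty$ the increments $\log m_i$ are no longer i.i.d.\ and Blackwell's theorem is not available for $S_n$. The same problem arises with the random time $N(\epsilon)=\inf\{N:|R_n-R_\infty|<\epsilon\ \forall n\ge N\}$: this is not a stopping time with respect to the natural filtration, so conditioning on $\{N(\epsilon)=N\}$ also destroys the independence of the post-$N$ increments. In short, the step ``conditioning on $(R_\infty,N(\epsilon))$ and applying Blackwell's renewal theorem to $S_n$ with shift $y-R_\infty$'' does not go through as written; you would need a renewal theorem for perturbed random walks where the perturbation is correlated with the walk, and no such tool is invoked. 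A secondary issue is that your Borel--Cantelli argument for $W_\infty>0$ only yields $Z_n\ge e^{\theta n}$ eventually with $\theta<\mu$, which does not preclude $Z_n/\Pi_n\to 0$.

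The paper avoids this dependence problem entirely by working on the Fourier side, following Buraczewski--Damek. The key input is Lemma~\ref{key}, which shows that $\phi_{k,n}(s)=\mathbb{E}_k[Z_n^{\mathrm{i}s};Z_n>0]/\lambda(s)^n$ converges uniformly (with exponential rate, and together with its derivatives) to a smooth function $\phi$ on a neighbourhood of $0$; combined with the bound $|\mathbb{E}_k[Z_n^{\mathrm{i}s};Z_n>0]|\le C_1 e^{-C_2 n^{1/12}}$ for $|s|$ bounded away from $0$ (the nonlattice estimate), this makes $\log Z_n$ behave, at the level of characteristic functions, like $S_n$ plus an asymptotically decoupled perturbation. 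The renewal theorem then follows from the Fourier-analytic proof used for $S_n$ itself, with $\mathbb{E}_{\mathcal S}[\cdot]$ in the BPRE argument replaced by $\mathbb{E}_k[\cdot\,;Z_n>0]$. This is precisely how the factorisation $\mathbb{E}_k[Z_n^{\mathrm{i}s};Z_n>0]\approx \phi(s)\lambda(s)^n$ encodes the ``asymptotic independence'' that your pathwise argument cannot supply.
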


 \subsection{Auxiliary results}
 Using Theorem \ref{lowerdeviation}, we estimate the harmonic moment of $\mathbf{Z}$ and the moment of $\log \mathbf{Z}$:

\begin{lemma}\label{harmonicZ}(Harmonic moment of $\textbf{Z}$)
	Suppose that \textbf{Assumptions (A) (B)} hold. Then for any $\alpha>0$ there are constants $C(\alpha),\beta>0$ such that for all $k\geq 1$,
	\beqlb\label{harmon}
	\mathbb{E}_k[Z_n^{-\alpha};Z_n>0]\leq C(\alpha)e^{-\beta n}.
	\eeqlb
\end{lemma}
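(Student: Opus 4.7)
The plan is to split the expectation $\mathbb{E}_k[Z_n^{-\alpha}; Z_n>0]$ according to whether $Z_n$ is smaller or larger than $e^{\theta n}$, where $\theta$ is the threshold provided by Theorem \ref{lowerdeviation}. Since $Z_n$ is integer-valued, the event $\{Z_n>0\}$ is the disjoint union $\{1\leq Z_n\leq e^{\theta n}\}\cup\{Z_n> e^{\theta n}\}$, so
\beqnn
\mathbb{E}_k[Z_n^{-\alpha}; Z_n>0]
= \mathbb{E}_k[Z_n^{-\alpha}; 1\leq Z_n\leq e^{\theta n}] + \mathbb{E}_k[Z_n^{-\alpha}; Z_n> e^{\theta n}].
\eeqnn

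On the first event, the integrand is trivially bounded, $Z_n^{-\alpha}\leq 1$, so this term is at most $\mathbb{P}_k(1\leq Z_n\leq e^{\theta n})$, which by Theorem \ref{lowerdeviation} is dominated by $Ce^{-\beta n}$ with constants independent of $k$. On the second event, we use the pointwise bound $Z_n^{-\alpha}< e^{-\alpha\theta n}$, so this term is at most $e^{-\alpha\theta n}$. Combining the two bounds yields
\beqnn
\mathbb{E}_k[Z_n^{-\alpha}; Z_n>0]\leq Ce^{-\beta n}+e^{-\alpha\theta n}\leq C(\alpha)e^{-\beta' n}
\eeqnn
with $\beta':=\min\{\beta,\alpha\theta\}>0$ and $C(\alpha):=C+1$, giving the claim.

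There is no real obstacle here: the only thing to verify is that the constants $\theta,\beta,C$ in Theorem \ref{lowerdeviation} are uniform in $k$, which is exactly what that theorem asserts. Accordingly I would present this as a short two-line proof, with the main content being the observation that Theorem \ref{lowerdeviation} already supplies the necessary exponential decay in the small-value regime, and the large-value regime is controlled by the deterministic bound coming from the exponential lower threshold $e^{\theta n}$.
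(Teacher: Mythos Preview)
Your proof is correct and essentially identical to the paper's own argument: the paper also splits at the threshold $e^{\theta n}$, bounds the small-value part by $\mathbb{P}_k(1\le Z_n\le e^{\theta n})\le Ce^{-\beta n}$ via Theorem~\ref{lowerdeviation}, and bounds the large-value part by $e^{-\alpha\theta n}$.
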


\begin{proof}
	By Theorem \ref{lowerdeviation}, for all $k\geq 1$,
	\beqnn
	\mathbb{E}_k[Z_n^{-\alpha};Z_n>0]
	&\leq &\mathbb{P}_k(1\leq Z_n\leq e^{\theta n})+\mathbb{E}_k[Z_n^{-\alpha};Z_n\geq e^{\theta n}]\crcr
	&\leq &Ce^{-\beta n}+e^{-\alpha\theta n}
	\eeqnn
	with $\theta,\beta$ as in (\ref{lowerde}). Then we get the desired result.
\end{proof}

\begin{lemma}(Moment of $\log\mathbf{Z}$) \label{logzn}
	Assume that    \textbf{Assumptions (A) (B)} hold.  Then for any $j\geq 1$ there are constants  $C(j)>0$ and $\beta>0$ such that for all $k\geq 1$,
	\beqnn
	\mathbb{E}_k[(\log Z_n)^j;Z_{n+1}=0,Z_n>0]\leq  C(j)e^{-\beta n}.
	\eeqnn
\end{lemma}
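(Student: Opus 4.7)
The plan is to condition on the environment $\xi$ and on $Z_n$ in order to make the event $\{Z_{n+1}=0\}$ explicit. Given $(\xi,Z_n)$, extinction at generation $n+1$ requires all $Z_n$ offspring to die and no immigrant to arrive, so
\beqnn
\mathbb{P}(Z_{n+1}=0\,|\,\xi,Z_n)=f_{n+1}(0)^{Z_n}h_{n+1}(0).
\eeqnn
Invoking Assumption (B), which ensures $f_{n+1}(0)<\delta$ a.s., together with the trivial bound $h_{n+1}(0)\leq 1$, I would obtain
\beqnn
\mathbb{E}_k[(\log Z_n)^j;\,Z_{n+1}=0,\,Z_n>0]\leq \mathbb{E}_k[(\log Z_n)^j\,\delta^{Z_n};\,Z_n>0].
\eeqnn
Everything then reduces to showing that this right-hand side decays exponentially in $n$.

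For that, I would split the expectation according to whether $Z_n$ falls below or above $e^{\theta n}$, where $\theta>0$ is the constant from Theorem \ref{lowerdeviation}. On $\{1\leq Z_n\leq e^{\theta n}\}$ one has $(\log Z_n)^j\leq (\theta n)^j$ and $\delta^{Z_n}\leq 1$, and Theorem \ref{lowerdeviation} bounds the probability of this event by $Ce^{-\beta n}$ uniformly in $k$, producing a contribution of at most $(\theta n)^j Ce^{-\beta n}$. On $\{Z_n>e^{\theta n}\}$ the factor $\delta^{Z_n}$ dominates: since $(\log z)^j\delta^z\leq C(j)e^{-\alpha z}$ for a suitable $\alpha>0$ and all $z\geq 1$ (a direct calculus fact, as $(\log z)^j$ grows sub-exponentially while $\delta^z$ decays geometrically), this piece is bounded by $C(j)e^{-\alpha e^{\theta n}}$, which is doubly-exponentially small and hence negligible.

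Summing the two pieces gives a bound of the form $(\theta n)^j Ce^{-\beta n}+C(j)e^{-\alpha e^{\theta n}}$, and absorbing the polynomial factor $(\theta n)^j$ by shrinking $\beta$ slightly yields the claimed estimate $C(j)e^{-\beta' n}$ with some $\beta'>0$ independent of $k$ and $j$.

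I do not expect a genuine obstacle: the lemma is essentially a direct consequence of Theorem \ref{lowerdeviation}, since extinction at generation $n+1$ forces $Z_n$ to be small whenever $f_{n+1}(0)$ is uniformly bounded away from $1$. The only minor technical point is the clean absorption of the polynomial prefactor $(\theta n)^j$ and of the $j$-dependent constant arising from the bound on $(\log z)^j\delta^z$, both of which are harmless.
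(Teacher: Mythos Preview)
Your proposal is correct and follows essentially the same route as the paper: reduce to $\mathbb{E}_k[(\log Z_n)^j\delta^{Z_n};Z_n>0]$ via Assumption~(B), then split at $e^{\theta n}$ and invoke Theorem~\ref{lowerdeviation}. The only cosmetic difference is that the paper first absorbs $(\log z)^j$ into a single exponential bound $(\log z)^j\delta^z\le \bar C(j)\big(\tfrac{\delta+1}{2}\big)^z$ before splitting, whereas you split first and handle the polynomial factor $(\theta n)^j$ at the end; both are equally straightforward.
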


\begin{proof}
	By branching property and Assumption (B) we have
	\beqnn
	\mathbb{E}_k[(\log Z_n)^j;Z_{n+1}=0,Z_n>0]&=&\mathbb{E}_k[(\log Z_n)^j\mathbf{1}_{\{Z_n>0\}}\mathbb{E}[\mathbf{1}_{\{Z_{n+1}=0\}}|Z_n,\xi]]\crcr
	&=&\mathbb{E}_k[(\log Z_n)^j f_{n+1}^{Z_n}(0)h_{n+1}(0);Z_n>0]\crcr
	&\leq &\mathbb{E}_k[(\log Z_n)^j \delta^{Z_n};Z_n>0].
	\eeqnn
	Since $\delta\in(0,1)$, we have $\frac{\delta+1}{2\delta}>1$. Thus, for any $j\geq 1$ there is a constant $\bar{C}(j)$ such that
	\beqnn
	(\log x)^j\leq \bar{C}(j)\bigg(\frac{\delta+1}{2\delta}\bigg)^x,\quad x\geq 1.
	\eeqnn
	Using this inequality we have
	\beqnn
	\mathbb{E}_k[(\log Z_n)^j;Z_{n+1}=0,Z_n>0]
	&\leq &
	\bar{C}(j)\mathbb{E}_k\bigg[\bigg(\frac{\delta+1}{2}\bigg)^{Z_n};Z_n>0\bigg]\crcr
	&\leq &\bar{C}(j)\mathbb{E}_k\bigg[\bigg(\frac{\delta+1}{2}\bigg)^{Z_n};Z_n>e^{\theta n}\bigg]+\bar{C}(j)\mathbb{P}_k(1\leq Z_n\leq e^{\theta n}) \crcr
	&\leq &\bar{C}(j)\bigg(\frac{\delta+1}{2}\bigg)^{e^{\theta n}}+\bar{C}(j)Ce^{-\beta n}\crcr
	\eeqnn
	with $\theta,\beta$ as in (\ref{lowerde}), where we use Theorem \ref{lowerdeviation} in the last inequality. Combining this with the fact $(\delta+1)/2\in(0,1)$ yields the desired result.
\end{proof}

Next we use the skill in \cite{2022dar} to measure deviation of the process $Z_{n+1}$ from $m_{n+1}Z_n$ on the set $\{Z_n>0\}$. Let us define
\beqlb\label{Delta}
\Delta_n=\frac{Z_{n+1}}{m_{n+1}Z_n}.
\eeqlb
Then $\Delta_n$ makes sense only when $Z_n>0$. 
We hope that $\Delta_n$ should be close to $1$.

\begin{lemma}\label{deviation}
	Suppose  that  \textbf{Assumptions (A)--(C)} hold. Then there are constants $C,\beta>0$ such that for any $r\in[0,q]$, $k\geq 1$,
	\beqnn
	\mathbb{E}_k[|\log(m_{n+1}Z_n)|^r|\Delta_n-1|^p;Z_n>0]\leq Ce^{-\beta n}.
	\eeqnn
\end{lemma}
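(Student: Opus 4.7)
The plan is to expand $\Delta_n-1$ explicitly and reduce the bound to a harmonic moment of $Z_n$ via a quenched $p$-th moment inequality. On $\{Z_n>0\}$,
$$\Delta_n-1=\frac{1}{m_{n+1}Z_n}\bigg[\sum_{i=1}^{Z_n}(N_{n+1,i}-m_{n+1})+Y_{n+1}\bigg].$$
Let $\mathcal{F}_n:=\sigma(\xi_1,\dots,\xi_n,Z_0,\dots,Z_n)$. Conditionally on $\xi$ and $\mathcal{F}_n$, the random variables $N_{n+1,i}-m_{n+1}$ ($1\le i\le Z_n$) are i.i.d.\ centered and independent of $Y_{n+1}$, so the von Bahr--Esseen inequality (applicable because $p\in(1,2]$) combined with $|a+b|^p\le 2^{p-1}(|a|^p+|b|^p)$ gives
$$\mathbb{E}\bigl[|\Delta_n-1|^p\bigm|\xi,\mathcal{F}_n\bigr]\mathbf{1}_{\{Z_n\ge 1\}}\le\frac{C_p}{Z_n^{p-1}}\,B(\xi_{n+1}),\qquad B(\xi_{n+1}):=\mathbb{E}\bigl[|N_{n+1,1}/m_{n+1}-1|^p+|Y_{n+1}/m_{n+1}|^p\bigm|\xi_{n+1}\bigr],$$
using $Z_n^{-p}\le Z_n^{-(p-1)}$ on $\{Z_n\ge 1\}$.

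Next I would peel off the logarithm factor. Since $\log Z_n\ge 0$ on $\{Z_n\ge 1\}$, for every $r\in[0,q]$ there is $c_r>0$ with $|\log(m_{n+1}Z_n)|^r\le c_r(|\log m_{n+1}|^r+(\log Z_n)^r)$. The quantity $|\log m_{n+1}|^r B(\xi_{n+1})$ depends on $\xi_{n+1}$ only, and under $\mathbb{P}_k$ the environment increment $\xi_{n+1}$ is independent of $\mathcal{F}_n$. Taking first the $(\xi_{n+1},\mathcal{F}_n)$-conditional expectation and then the annealed expectation factorises the two resulting terms, giving
$$\mathbb{E}_k\bigl[|\log(m_{n+1}Z_n)|^r|\Delta_n-1|^p;Z_n>0\bigr]\le C_pc_r\Bigl(K_1\,\mathbb{E}_k[Z_n^{-(p-1)};Z_n>0]+K_2\,\mathbb{E}_k[(\log Z_n)^r Z_n^{-(p-1)};Z_n>0]\Bigr),$$
with $K_1:=\mathbb{E}[|\log m_1|^r B(\xi_1)]$ and $K_2:=\mathbb{E}[B(\xi_1)]$. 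Using $|\log m_1|^r\le 1+|\log m_1|^q$ for $r\in[0,q]$, Assumption~(E) yields $K_1,K_2<\infty$ uniformly in $r$.

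The two $Z_n$-expectations are then handled by Lemma~\ref{harmonicZ}. The first is directly $\mathbb{E}_k[Z_n^{-(p-1)};Z_n>0]\le Ce^{-\beta n}$. For the second, use the elementary bound $(\log x)^r\le C'_r x^{(p-1)/2}$ for $x\ge 1$, which gives $(\log Z_n)^r Z_n^{-(p-1)}\le C'_r Z_n^{-(p-1)/2}$ on $\{Z_n\ge 1\}$; Lemma~\ref{harmonicZ} again supplies an exponential bound. Combining everything produces the claimed $Ce^{-\beta n}$ estimate. The main obstacle is really the bookkeeping around the quenched conditioning and the choice of the right $p$-th moment inequality (von Bahr--Esseen is available precisely because Assumption~(E) fixes $p\in(1,2]$); the logarithm factor is innocuous because the harmonic moments of $Z_n$ already decay exponentially and any polylogarithm is absorbed into an arbitrarily small power of $Z_n$.
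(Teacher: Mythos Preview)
Your proof is correct and follows essentially the same route as the paper: decompose $\Delta_n-1$, apply a $p$-th moment inequality for centered sums (you use von Bahr--Esseen, the paper uses a consequence of Marcinkiewicz--Zygmund from \cite{2017liu}; both yield the $Z_n^{1-p}$ factor for $p\in(1,2]$), split $|\log(m_{n+1}Z_n)|^r$ into $|\log m_{n+1}|^r+(\log Z_n)^r$, factor via independence, and finish with the harmonic moment estimate of Lemma~\ref{harmonicZ}. You are slightly more explicit than the paper about absorbing the $(\log Z_n)^r$ factor into a small power of $Z_n$ and about uniformity of the constants in $r\in[0,q]$, but the argument is the same.
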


\begin{proof}
	From (\ref{Delta}) and (\ref{define}), on the set $\{Z_n>0\}$, we have the decomposition
	\beqnn
	\Delta_n=\frac{Z_{n+1}}{m_{n+1}Z_n}=\frac{1}{Z_n}\bigg(\sum_{i=1}^{Z_n}\frac{N_{n+1,i}}{m_{n+1}}+\frac{Y_{n+1}}{m_{n+1}}\bigg)
	.
	\eeqnn
	Then we use a direct consequence of the Marcinkiewicz-Zygmund inequality (see \cite[Lemma 2.1]{2017liu}) to estimate the conditional expectation of $|\Delta_n-1|$ on the set $\{Z_n>0\}$   in quenched law:
	\beqnn
	&&\mathbb{E}_k[|\log m_{n+1}|^r|\Delta_n-1|^p|Z_n,\xi]\crcr
	&&\leq C\mathbb{E}_k\bigg[\frac{|\log m_{n+1}|^r}{Z_n^p}\bigg|\sum_{i=1}^{Z_n}\bigg(\frac{N_{n+1,i}}{m_{n+1}}-1\bigg)\bigg|^p\big|Z_n,\xi\bigg]+CZ_n^{-p}\mathbb{E}\bigg[|\log m_{n+1}|^r\bigg|\frac{Y_{n+1}}{m_{n+1}}\bigg|^p|\xi\bigg] \crcr
	&&\leq  CZ_n^{1-p}\mathbb{E}\bigg[|\log m_{n+1}|^r\bigg|\frac{N_{n+1,i}}{m_{n+1}}-1\bigg|^p|\xi\bigg]+CZ_n^{-p}\mathbb{E}\bigg[|\log m_{n+1}|^r\bigg|\frac{Y_{n+1}}{m_{n+1}}\bigg|^p|\xi\bigg] .
	\eeqnn
	Therefore, taking expectation we obtain
	\beqnn
	&&\mathbb{E}_k[|\log(m_{n+1}Z_n)|^r|\Delta_n-1|^p;Z_n>0]\crcr
	&&\leq C\mathbb{E}_k\bigg[\bigg(|\log m_{n+1}|^r+(\log Z_n)^r\bigg)|\Delta_n-1|^p;Z_n>0\bigg]\crcr
	&&\leq C\mathbb{E}_k[Z_n^{1-p}(1+(\log Z_n)^r);Z_n>0]\cdot\mathbb{E}\bigg[(1+|\log m_{1}|^r)\bigg(\bigg|\frac{N_{1,1}}{m_{1}}-1\bigg|^p+\bigg|\frac{Y_1}{m_1}\bigg|^p\bigg)\bigg].
	\eeqnn
	Combining this with Lemma \ref{harmonicZ} and Assumption (C) gives the claim in the lemma. 
\end{proof}

Next Lemma is the direct consequence of Lemma \ref{deviation}:

\begin{lemma}\label{logdeltan}
Suppose  that   \textbf{Assumptions (A)--(C)} hold. Then there are constants $C,\beta >0$ such that for all $k\geq 1$,
	\beqnn
	\mathbb{E}_k[(\log Z_{n+1})^j|\log (m_{n+1}Z_n)|^l|\log \Delta_n|;Z_{n+1}>0,Z_n>0]\leq Ce^{-\beta n}
	\eeqnn
	for any $n,j,l\in\mathbb{N}$ and $j+l+1\leq q$.
\end{lemma}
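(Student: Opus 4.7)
My plan is to reduce the quantity to Lemma~\ref{deviation} by means of a pointwise bound on $|\log \Delta_n|$. First, on $\{Z_{n+1}\geq 1\}$, I would establish the key inequality
\[
|\log \Delta_n|\leq 2|\Delta_n-1| + 2^p|\log(m_{n+1}Z_n)|\,|\Delta_n-1|^p.
\]
A two-case analysis does the job: for $\Delta_n\geq 1/2$ the concavity bound $|\log x|\leq 2|x-1|$ on $[1/2,\infty)$ yields the first summand, while for $\Delta_n<1/2$ we use $|\log\Delta_n|=\log(m_{n+1}Z_n)-\log Z_{n+1}\leq \log(m_{n+1}Z_n)$ combined with $|\Delta_n-1|\geq 1/2$, which gives $1\leq 2^p|\Delta_n-1|^p$, to obtain the second.

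Multiplying by $(\log Z_{n+1})^j|\log(m_{n+1}Z_n)|^l$ and taking expectation splits the claim into two pieces. The piece carrying $|\log(m_{n+1}Z_n)|\,|\Delta_n-1|^p$ is handled by writing $\log Z_{n+1}=\log(m_{n+1}Z_n)+\log\Delta_n$, applying Young's inequality to redistribute the $(\log Z_{n+1})^j$ factor, and then bounding $(\log Z_{n+1})^{j+l+1}$ by $2^{j+l}\bigl(|\log(m_{n+1}Z_n)|^{j+l+1}+|\log\Delta_n|^{j+l+1}\bigr)$. Each resulting expectation either takes the form $\mathbb{E}_k[|\log(m_{n+1}Z_n)|^r|\Delta_n-1|^p;\,Z_n>0]$ with $r\leq j+l+1\leq q$, bounded by $Ce^{-\beta n}$ via Lemma~\ref{deviation}, or carries an extra $|\log\Delta_n|^{j+l+1}$ factor that is reduced by one more application of the key inequality.

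The piece carrying $|\Delta_n-1|$ is split on $|\Delta_n-1|\gtrless 1$. On $\{|\Delta_n-1|\geq 1\}$, the trivial $|\Delta_n-1|\leq|\Delta_n-1|^p$ reduces to the previous case. On $\{|\Delta_n-1|<1\}$, I apply H\"older's inequality with conjugate exponents $s=\min\{q/(j+l),\,p/(p-1)\}$ (convention $q/0=\infty$) and $s'=s/(s-1)$. These choices satisfy $s'\geq p$ (so $|\Delta_n-1|^{s'}\leq|\Delta_n-1|^p$) and $(j+l)s\leq q$, owing to $p\in(1,2]$ and $j+l+1\leq q$. The moment $\mathbb{E}_k\bigl[\bigl((\log Z_{n+1})^j|\log(m_{n+1}Z_n)|^l\bigr)^s\bigr]^{1/s}$ then grows at most polynomially in $n$, using Young once more to separate the two factors, Assumption~(E) to control moments of $\log m_1$, and the standard estimate $\mathbb{E}[(\log_+ Z_n)^r]\leq Cn^r$ for $r\leq q$ (which follows from $\mathbb{E}[Z_n]\leq Ce^{cn}$ and Markov's inequality); this polynomial factor is absorbed by the exponential decay $e^{-\beta n/s'}$ coming from Lemma~\ref{deviation}.

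The main obstacle I anticipate is controlling the residual $|\log\Delta_n|^{j+l+1}$ terms produced by iterating the key inequality in the second piece: a na\"ive substitution raises $|\Delta_n-1|$ to a power higher than $p$, which is beyond the reach of Assumption~(E) alone. This is overcome by splitting once more on $|\Delta_n-1|\lessgtr 1$: on $\{|\Delta_n-1|\leq 1\}$ all higher powers are dominated by $|\Delta_n-1|^p$, while on $\{|\Delta_n-1|>1\}$ the event has probability $\leq Ce^{-\beta n}$ by Lemma~\ref{deviation} with $r=0$, which, combined with a Cauchy--Schwarz step, absorbs the polynomial growth of the remaining factors.
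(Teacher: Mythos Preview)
Your approach has two genuine gaps.

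First, the ``standard estimate'' $\mathbb{E}_k[(\log_+ Z_n)^r]\leq Cn^r$ cannot be derived from $\mathbb{E}[Z_n]\leq Ce^{cn}$ as you claim, because nothing in Assumptions~(A)--(E) forces $\mathbb{E}[m_1]<\infty$: Assumption~(E) only gives $\mathbb{E}|\log m_1|^q<\infty$, and $\log m_1$ may have merely polynomial tails, making $\mathbb{E}[m_1^\alpha]=\infty$ for every $\alpha>0$ and hence $\mathbb{E}[Z_n]=\infty$. Your H\"older step in Piece~1 rests on this unjustified bound.

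Second, and more structurally, your Cauchy--Schwarz step on $\{|\Delta_n-1|>1\}$ for the residual $|\log\Delta_n|^{j+l+1}|\Delta_n-1|^p$ fails: separating out the indicator leaves a factor whose $L^2$-norm requires $\mathbb{E}_k[|\Delta_n-1|^{2p};Z_n>0]$ (or higher), which is not controlled by Assumption~(E). On that set $\Delta_n>2$, and no manipulation converts $|\log\Delta_n|^{j+l+1}|\Delta_n-1|^p$ into something involving only $|\Delta_n-1|^p$ times a factor with polynomially bounded moments; any bound of $|\log\Delta_n|$ by a power of $|\Delta_n-1|$ pushes the total exponent strictly above $p$.

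The paper's proof avoids both problems by never isolating moments of $\log Z_{n+1}$ and never generating powers of $|\Delta_n-1|$ above $p$. It starts from the sharper pointwise bound
\[
\log Z_{n+1}\leq |\log\Delta_n|\,\mathbf{1}_{\{\Delta_n\geq 1/2\}}+|\log(m_{n+1}Z_n)|,
\]
valid because $\log\Delta_n<0$ on $\{\Delta_n<1/2\}$; this absorbs $(\log Z_{n+1})^j$ entirely into $|\log(m_{n+1}Z_n)|$ and $|\log\Delta_n|\mathbf{1}_{\{\Delta_n\geq 1/2\}}$, so no separate control of $\mathbb{E}_k[(\log Z_{n+1})^r]$ is needed. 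Then on $\{\Delta_n\geq 1/2\}$ it uses that $x\mapsto |\log(1+x)|^{s}/|x|^{t}$ is bounded on $[-1/2,\infty)$ whenever $s>1$ and $t\leq s$: taking $s=j+1\geq 2$ and $t=p\leq 2$ gives $|\log\Delta_n|^{j+1}\leq C|\Delta_n-1|^p$ \emph{directly}, without any extra $|\Delta_n-1|$ factor, and Lemma~\ref{deviation} applies. On $\{\Delta_n<1/2\}$ one has $|\log\Delta_n|\leq|\log(m_{n+1}Z_n)|$ together with $2^p|\Delta_n-1|^p\geq 1$, again reducing to Lemma~\ref{deviation}. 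Your global inequality $|\log\Delta_n|\leq 2|\Delta_n-1|+2^p|\log(m_{n+1}Z_n)||\Delta_n-1|^p$ discards precisely the two ingredients---the indicator $\mathbf{1}_{\{\Delta_n\geq 1/2\}}$ and the flexibility $t=p$ in place of $t=1$---that make the argument close.
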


\begin{proof}
    Note that $Z_{n+1}=\Delta_n m_{n+1}Z_n$. On the set $\{Z_{n+1}>0,Z_n>0\}$ we have
    \beqnn
    \log Z_{n+1}\leq |\log\Delta_n|\mathbf{1}_{\{\Delta_n\geq 1/2\}}+|\log(m_{n+1}Z_n)|.
    \eeqnn
    Thus, it is sufficient to prove
    \beqlb\label{del>1/2}
    \mathbb{E}_k[|\log (m_{n+1}Z_n)|^l|\log \Delta_n|^{j+1};\Delta_n\geq 1/2,Z_{n+1}>0,Z_n>0]\leq Ce^{-\beta n}
    \eeqlb
    for $j+l+1\leq q$ 
    and
    \beqlb\label{del<1/2}
    \mathbb{E}_k[|\log (m_{n+1}Z_n)|^l|\log \Delta_n|;\Delta_n<1/2,Z_{n+1}>0,Z_n>0]\leq Ce^{-\beta n}
    \eeqlb
    for $l+1\leq q$.

    Firstly we focus on (\ref{del>1/2}).  For  $s>1$ and $t\le s$, noting that $$
    J(x):=\frac{|\log(1+x)|^{s}}{|x|^t}
    $$
    is  continuous on $[-1/2,\infty)$  and $\lim_{x\to\infty} J(x)=0$, then
    there is a constant $0<C<\infty$ such that
    $J(x)\leq C$
	for any $x\geq -1/2$.  Thus, if $j>0$, taking $s=j+1$ and $t=p$ in $J(x)$ we obtain
	\beqnn
	&&\mathbb{E}_k[|\log (m_{n+1}Z_n)|^l|\log \Delta_n|^{j+1};\Delta_n\geq 1/2,Z_{n+1}>0,Z_n>0]\crcr
	&\leq & C\mathbb{E}_k[|\log (m_{n+1}Z_n)|^l| \Delta_n-1|^p;Z_n>0]\crcr
	&\leq & Ce^{-\beta n},
	\eeqnn
	where we use Lemma \ref{deviation} in the last inequality. If $j=0$, then choose $1<r<p$ such that $rl\leq q$. Using the Jensen inequality we obtain
	\beqnn
	&&\mathbb{E}_k[|\log (m_{n+1}Z_n)|^l|\log \Delta_n|;\Delta_n\geq 1/2,Z_{n+1}>0,Z_n>0]\crcr
	&\leq &(\mathbb{E}_k[|\log (m_{n+1}Z_n)|^{rl}|\log \Delta_n|^r;\Delta_n\geq 1/2,Z_{n+1}>0,Z_n>0])^{1/r}.
	\eeqnn
	Then  taking $s=r$ and $t=r$ in $J(x)$,  using the same methods as in the previous case, we finish the proof of (\ref{del>1/2}).

    To estimate (\ref{del<1/2}) note that on the set $\{Z_{n+1}>0,Z_n>0\}\cap\{\Delta_n<1/2\}$ we have
	\beqnn
	\frac{1}{2}>\Delta_n=\frac{Z_{n+1}}{m_{n+1}Z_n}\geq\frac{1}{m_{n+1}Z_n},
	\eeqnn
 which implies that
 $$|\log(m_{n+1}Z_n)|\geq |\log\Delta_n|,\quad 2^p|\Delta_n-1|^p\geq 1.$$
 Thus, from Lemma \ref{deviation},
	\beqnn
	&&\mathbb{E}_k[|\log (m_{n+1}Z_n)|^l|\log \Delta_n|;\Delta_n<1/2,Z_{n+1}>0,Z_n>0]\crcr
	&\leq & 2^p\mathbb{E}_k[|\log(m_{n+1}Z_n)|^{l+1}|\Delta_n-1|^p;Z_n>0]\crcr
	&\leq &  Ce^{-\beta n}.
	\eeqnn
	Consequently, the proof is finished.
\end{proof}

 In order to prove Theorems \ref{edgeworth} and \ref{renewal}, we shall use the upper bound of the characteristic function of $\log Z_n$ on $\{Z_n>0\}$ below. The proof is similar with the proof of Lemma 4.1 in \cite{2022dar}. We omit the details here. 

\begin{lemma}
	  Suppose  that  \textbf{Assumptions (A)--(C)} hold, $\mathbb{E}(\log m_1)^2<\infty$, $\sigma>0$ and $\log m_1$ is nonlattice. Then, given $0<M<\infty$, there are positive constants $C_1,C_2$ such that for all $k\geq 1$,
	\beqlb\label{n-13M}
	\sup_{|s|\in[n^{-1/3},M]}\left|\mathbb{E}_k[Z_n^{\mathrm{i}s};Z_n>0]\right|\leq C_1e^{-C_2 n^{1/12}}.
	\eeqlb
	Moreover if (\ref{wuqiong}) holds, then for any $\kappa_1,\kappa_2>0$ there are constants $C_3,\beta>0$ such that for all $k\geq 1$,
	\beqlb\label{kngamma}
	\sup_{|s|\in[\kappa_1,n^{\kappa_2}]}\left|\mathbb{E}_k[Z_n^{\mathrm{i}s};Z_n>0]\right|\leq C_3e^{-\beta n}.
	\eeqlb
\end{lemma}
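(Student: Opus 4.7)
The plan is built around the identity
\[
\log Z_n=\log k+S_n+R_n,\qquad R_n:=\sum_{i=0}^{n-1}\log\Delta_i,
\]
valid on the event $\{Z_i>0\text{ for all }i\in[0,n]\}$. Applying Lemma \ref{logdeltan} with $j=l=0$ yields $\mathbb{E}_k[|\log\Delta_i|;Z_i>0,Z_{i+1}>0]\le Ce^{-\beta i}$, so the tail $R_n-R_j=\sum_{i=j}^{n-1}\log\Delta_i$ satisfies $\mathbb{E}_k[|R_n-R_j|\mathbf{1}_{A_{j,n}}]\le Ce^{-\beta j}$, where $A_{j,n}:=\{Z_i>0,\forall\,i\in[j,n]\}$. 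The overall strategy is to split at an intermediate time $j$ and decorrelate $S_n-S_j$ (a function only of $\xi_{j+1},\ldots,\xi_n$) from everything measurable up to time $j$.

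First I restrict to $A_{j,n}$: by Lemma \ref{pzn=0} and the union bound,
\[
\mathbb{P}_k\bigl(\{Z_n>0\}\cap A_{j,n}^{c}\bigr)\le\mathbb{P}_k(Z_j=0)+\sum_{i=j+1}^{n-1}\mathbb{P}_k(Z_i=0)\le Ce^{-\beta j},
\]
so discarding $A_{j,n}^{c}$ from $\phi_{n,k}(s):=\mathbb{E}_k[Z_n^{\mathrm{i}s};Z_n>0]$ costs $Ce^{-\beta j}$. On $A_{j,n}$ the identity gives $Z_n^{\mathrm{i}s}=Z_j^{\mathrm{i}s}e^{\mathrm{i}s(S_n-S_j)}e^{\mathrm{i}s(R_n-R_j)}$; using $|e^{\mathrm{i}sx}-1|\le|s||x|$ and the tail bound, I replace $e^{\mathrm{i}s(R_n-R_j)}$ by $1$ at cost $C|s|e^{-\beta j}$, and then enlarge $\mathbf{1}_{A_{j,n}}$ to $\mathbf{1}_{\{Z_j>0\}}$ at cost $Ce^{-\beta j}$. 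The key factorization now comes from independence: $Z_j^{\mathrm{i}s}\mathbf{1}_{\{Z_j>0\}}$ lies in the $\sigma$-algebra of the environment and branching through time $j$, while $e^{\mathrm{i}s(S_n-S_j)}$ depends only on $\xi_{j+1},\ldots,\xi_n$, so
\[
\mathbb{E}_k[Z_j^{\mathrm{i}s}e^{\mathrm{i}s(S_n-S_j)};Z_j>0]=\phi_{j,k}(s)\,\lambda(s)^{n-j}.
\]
Since $|\phi_{j,k}(s)|\le 1$, I conclude $|\phi_{n,k}(s)|\le|\lambda(s)|^{n-j}+C(|s|+1)e^{-\beta j}$.

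It remains to estimate $|\lambda(s)|$. For \reff{n-13M}, $\mathbb{E}(\log m)^2<\infty$ yields $|\lambda(s)|\le 1-cs^2$ near $0$, while nonlattice plus compactness gives $|\lambda(s)|\le\rho_0<1$ on any annulus $s_0\le|s|\le M$; combined with $|s|\ge n^{-1/3}$ and a balancing choice such as $j\asymp n^{1/12}$, both contributions are dominated by $e^{-C_2 n^{1/12}}$. For \reff{kngamma}, hypothesis \reff{wuqiong} plus nonlattice gives $|\lambda(s)|\le\rho<1$ uniformly on $\{|s|\ge\kappa_1\}$; taking $j=\lfloor n/2\rfloor$ makes both terms exponentially small in $n$, beating the polynomial factor $n^{\kappa_2}$ coming from $|s|$.

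The main obstacle is that, unlike in classical BPRE, the event $\{Z_n>0\}$ is not monotone here: immigration can resurrect an extinct lineage. Lemma \ref{pzn=0} resolves this: the decay $\mathbb{P}_k(Z_i=0)\le Ce^{-\beta i}$ is summable, so every zero occurring after time $j$ contributes at most $O(e^{-\beta j})$ in total, which lets the decomposition $\log Z_n=\log Z_j+(S_n-S_j)+(R_n-R_j)$ hold up to a negligible set and the independence argument go through cleanly.
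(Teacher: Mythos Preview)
Your proposal is correct and follows essentially the same approach as the paper: split at an intermediate time, use independence of $Z_j$ from the future environment to factor out $\lambda(s)^{n-j}$, and control the remainder via Lemmas~\ref{pzn=0} and~\ref{logdeltan}. The only cosmetic difference is that the paper telescopes $(Z_{j}\Pi_{j,n})^{\mathrm is}\mapsto(Z_{j+1}\Pi_{j+1,n})^{\mathrm is}$ one step at a time with the fixed choice $l=\lfloor n/4\rfloor$ (picking up a harmless factor $n$), whereas you bound the full tail $\sum_{i\ge j}\mathbb E_k[|\log\Delta_i|;Z_i>0,Z_{i+1}>0]$ as a geometric series in one shot; your choice $j\asymp n^{1/12}$ works but any $j$ proportional to $n$ (as in the paper) would too.
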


\subsection{Proofs}

Recall $\lambda(\cdot)$ defined in (\ref{lambda}). Denote
\beqnn
\phi_{k,n}(s):=\frac{\mathbb{E}_k[e^{\mathrm{i}s\log Z_n};Z_n>0]}{\lambda(s)^n}=\frac{\mathbb{E}_k[Z_n^{\mathrm{i}s};Z_n>0]}{\lambda(s)^n},\quad k\geq 1,s\in\mathbb{R}.
\eeqnn
Now we present a significant lemma which is key to the proof of Theorems \ref{CLT}, \ref{edgeworth} and \ref{renewal}:

\begin{lemma}\label{key}
	Assume that  \textbf{Assumptions (A)--(C)} hold. Let $K=\lfloor q-1\rfloor$. Then there are $\eta>0$, a function $\phi\in C^K(I_\eta)$ defined on $I_\eta:=(-\eta,\eta)$ and  constants $C,\beta>0$  such that for all $k\geq 1$,
	\beqlb\label{keyineq}
	|\phi_{k,n}^{(j)}(s)-\phi^{(j)}(s)|\leq   Ce^{-\beta n}, \quad s\in I_\eta,j=0,\cdots,K.
	\eeqlb
\end{lemma}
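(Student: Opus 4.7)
The plan is to show that $(\phi_{k,n}^{(j)})_{n\ge 1}$ is uniformly Cauchy on a small interval $I_\eta=(-\eta,\eta)$ with geometric rate, for every $0\le j\le K$; this will simultaneously produce the limit function, its $C^K$ regularity, and the exponential bound \reff{keyineq}. Writing $\psi_{k,n}(s):=\mathbb{E}_k[Z_n^{\mathrm{i}s};Z_n>0]=\phi_{k,n}(s)\lambda(s)^n$, I would first control the one-step increment
\beqnn
\phi_{k,n+1}(s)-\phi_{k,n}(s)=\frac{\psi_{k,n+1}(s)-\lambda(s)\psi_{k,n}(s)}{\lambda(s)^{n+1}}
\eeqnn
and all of its derivatives up to order $K$. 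Since $\lambda(0)=1$ and $\lambda\in C^{K+1}$ by Assumption (E), one can shrink $\eta$ so that $|\lambda(s)|^{-(n+1)}\le e^{\beta n/4}$ on $I_\eta$, where $\beta$ is the decay rate secured in the numerator; this lets the exponential decay of the numerator dominate the blow-up of $\lambda^{-(n+1)}$.

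The heart of the argument is showing that for each $0\le j\le K$,
\beqnn
\bigg|\frac{d^j}{ds^j}\bigl(\psi_{k,n+1}(s)-\lambda(s)\psi_{k,n}(s)\bigr)\bigg|\le C(1+|s|)e^{-\beta n}
\eeqnn
uniformly on bounded intervals. Using independence of $m_{n+1}$ from the history, $\lambda(s)\psi_{k,n}(s)=\mathbb{E}_k[(m_{n+1}Z_n)^{\mathrm{i}s};Z_n>0]$, and the difference splits into three pieces according to the signs of $Z_n$ and $Z_{n+1}$. On $\{Z_n>0,Z_{n+1}>0\}$ one has $Z_{n+1}=m_{n+1}Z_n\Delta_n$; after differentiating $j$ times under the integral I would apply the algebraic identity
\beqnn
A^j e^{\mathrm{i}sA}-B^j e^{\mathrm{i}sB}=(A-B)\sum_{l=0}^{j-1}A^l B^{j-1-l}e^{\mathrm{i}sA}+B^j e^{\mathrm{i}sB}\bigl(e^{\mathrm{i}s(A-B)}-1\bigr),
\eeqnn
with $A=\log Z_{n+1}$ and $B=\log(m_{n+1}Z_n)$, so that $A-B=\log\Delta_n$ and $|e^{\mathrm{i}s(A-B)}-1|\le|s||\log\Delta_n|$ provide the decay factor. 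Lemma \ref{logdeltan}, applied to any exponents satisfying $l_1+l_2+1\le j+1\le K+1\le q$, then delivers the exponential bound on this piece. On $\{Z_n=0,Z_{n+1}>0\}$ one has $Z_{n+1}=Y_{n+1}$ and $\xi_{n+1}$ is independent of $\xi_{\le n}$, so the term factorizes as $\mathbb{P}_k(Z_n=0)\cdot\mathbb{E}[(\log Y_1)^j;Y_1>0]\le Ce^{-\beta n}$ via Lemma \ref{pzn=0} and Assumption (E). On $\{Z_n>0,Z_{n+1}=0\}$, I would use $\mathbb{E}[\mathbf{1}_{\{Z_{n+1}=0\}}\mid Z_n,\xi]\le\delta^{Z_n}$ from Assumption (B) and split $|\log(m_{n+1}Z_n)|^j\le C(|\log m_{n+1}|^j+|\log Z_n|^j)$: the $\log Z_n$-part is controlled by Lemma \ref{logzn}, while the $\log m_{n+1}$-part follows by independence of $m_{n+1}$ and $Z_n$ together with $\mathbb{E}_k[\delta^{Z_n};Z_n>0]\le Ce^{-\beta n}$, which is a consequence of Theorem \ref{lowerdeviation}.

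Once this numerator estimate is in hand, the Leibniz rule gives
\beqnn
\phi_{k,n+1}^{(j)}(s)-\phi_{k,n}^{(j)}(s)=\sum_{i=0}^{j}\binom{j}{i}\bigl[\psi_{k,n+1}-\lambda\psi_{k,n}\bigr]^{(i)}(s)\cdot\bigl[\lambda^{-(n+1)}\bigr]^{(j-i)}(s).
\eeqnn
The derivatives $[\lambda^{-(n+1)}]^{(l)}$ on $I_\eta$ are bounded by a polynomial in $n$ times $|\lambda(s)|^{-(n+1)}$, and shrinking $\eta$ forces the latter to be dominated by $e^{\beta n/2}$. Combining with the numerator bound yields $|\phi_{k,n+1}^{(j)}(s)-\phi_{k,n}^{(j)}(s)|\le Ce^{-\beta n/2}$ on $I_\eta$, which is summable. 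Hence $\phi_{k,n}^{(j)}$ converges uniformly on $I_\eta$, the limit $\phi$ lies in $C^K(I_\eta)$ with derivatives equal to the corresponding limits, and telescoping
\beqnn
|\phi_{k,n}^{(j)}(s)-\phi^{(j)}(s)|\le\sum_{m\ge n}|\phi_{k,m+1}^{(j)}(s)-\phi_{k,m}^{(j)}(s)|\le Ce^{-\beta n/2}
\eeqnn
on $I_\eta$ delivers \reff{keyineq}.

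The main obstacle is the derivative estimate: differentiating $\psi_{k,n+1}-\lambda\psi_{k,n}$ under the integral sign produces factors $(\log Z_{n+1})^{l_1}(\log(m_{n+1}Z_n))^{l_2}$ whose naive expectations grow polynomially in $n$, so one really needs the gain factor $\log\Delta_n$ extracted via the algebraic identity above, or the exponentially rare events $\{Z_n=0\}$ and $\{Z_{n+1}=0,Z_n>0\}$, to compensate for this polynomial growth. The constraint $K+1\le q$ is essential here because the hypotheses of Lemmas \ref{logdeltan} and \ref{logzn} require total index bounded by $q$; this is precisely where Assumption (E) is used in its full strength.
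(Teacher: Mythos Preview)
Your proposal is correct and follows essentially the same route as the paper: both control the one-step increment $\phi_{k,n+1}-\phi_{k,n}$ by writing its numerator as $J_{k,n}(s)=\psi_{k,n+1}(s)-\lambda(s)\psi_{k,n}(s)$, then split $J_{k,n}^{(j)}$ according to $\{Z_n>0,Z_{n+1}>0\}$, $\{Z_n=0\}$, $\{Z_n>0,Z_{n+1}=0\}$ and invoke Lemmas~\ref{pzn=0}, \ref{logzn}, \ref{logdeltan} and Assumption~(E) exactly as you describe.

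The only noteworthy differences are organizational. The paper runs an induction on $j$, proving $|\lambda(s)|^{n+1}|\phi_{k,n+1}^{(j)}-\phi_{k,n}^{(j)}|\le C n^j e^{-\beta_j n}$ by differentiating $L_{k,n}=\lambda^{n+1}(\phi_{k,n+1}-\phi_{k,n})$ and isolating the top-order term; you instead apply Leibniz directly to $J_{k,n}\cdot\lambda^{-(n+1)}$ and bound $[\lambda^{-(n+1)}]^{(l)}$ by $C_l n^l|\lambda|^{-(n+1)}$, which is the same estimate the paper cites from \cite{2022dar}. For the piece $\{Z_n>0,Z_{n+1}=0\}$ with the factor $|\log m_{n+1}|^j$, the paper uses H\"older against $\mathbb{P}_k(Z_{n+1}=0)$, whereas you condition on $(Z_n,\xi)$ to get $\delta^{Z_n}$ and then factorize via independence of $\xi_{n+1}$; both give the required exponential decay. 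One small imprecision: Assumption~(E) guarantees $\lambda\in C^{\lfloor q\rfloor}$ (hence $C^K$), not necessarily $C^{K+1}$, but only $C^K$ is needed for your Leibniz step.
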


\begin{proof}
  The frame work of our proof is inspired by \cite[Proposition 2.1]{2022dar}.
	
	Step 1. We show that
	\beqlb\label{suff}
	|\phi_{k,n+1}(s)-\phi_{k,n}(s)|\leq Ce^{-\beta_0 n},\quad s\in I_{\eta_0}
	\eeqlb
	for some $C,\beta_0>0$ and $\eta_0>0$. This entails the existence of $\phi$. Using the independence of $Z_n$ and $m_{n+1}$ we have
	\beqlb\label{phi-phi}
	&&|\lambda(s)|^{n+1}|\phi_{k,n+1}(s)-\phi_{k,n}(s)|\crcr
	&=&|\mathbb{E}_k[Z_{n+1}^{\mathrm{i}s};Z_{n+1}>0]-\lambda(s)\mathbb{E}_k[Z_n^{\mathrm{i}s};Z_n>0]|\crcr
	&\leq &\mathbb{E}_k[|Z_{n+1}^{\mathrm{i}s}-(m_{n+1}Z_n)^{\mathrm{i}s}|;Z_{n+1}>0,Z_n>0]\crcr
	&&+\mathbb{E}_k[|Z_{n+1}^{\mathrm{i}s}|;Z_{n+1}>0,Z_n=0]+\mathbb{E}_k[|(m_{n+1}Z_n)^{\mathrm{i}s}|;Z_{n+1}=0,Z_n>0]\crcr
	&\leq &\mathbb{E}_k[|Z_{n+1}^{\mathrm{i}s}-(m_{n+1}Z_n)^{\mathrm{i}s}|;Z_{n+1}>0,Z_n>0]\crcr
	&&+\mathbb{P}_k(Z_n=0)+\mathbb{P}_k(Z_{n+1}=0).
	\eeqlb
	From Lemma \ref{pzn=0} we have
	\beqlb\label{pn+1}
	\mathbb{P}_k(Z_n=0)+\mathbb{P}_k(Z_{n+1}=0)\leq Ce^{-\beta n}
	\eeqlb
	with some $C,\beta>0$.
	  Note that \beqlb\label{basicineq}
	|e^{\mathrm{i}s}-e^{\mathrm{i}t}|\leq |s-t|,\quad s,t\in\mathbb{R}.
	\eeqlb 
	From Lemmas \ref{logdeltan} and (\ref{basicineq}) we obtain
	\beqnn
	\mathbb{E}_k[|Z_{n+1}^{\mathrm{i}s}-(m_{n+1}Z_n)^{\mathrm{i}s}|;Z_{n+1}>0,Z_n>0]&\leq &|s|\mathbb{E}[|\log\Delta_n|;Z_{n+1}>0,Z_n>0]\crcr
	&\leq &C|s|e^{-\beta n}
	\eeqnn
	with some $C,\beta>0$.
	Combining this with (\ref{phi-phi}) and (\ref{pn+1}) yields that
	\beqnn
	|\lambda(s)|^{n+1}|\phi_{k,n+1}(s)-\phi_{k,n}(s)|\leq Ce^{-\beta n}
	\eeqnn
	with some $C,\beta>0$ for $s\in I_\eta$. Since the function $\lambda(\cdot)$ is continuous and $\lambda(0)=1$, there exists a small neighborhood of $0$ such that (\ref{suff}) is satisfied with some $\beta_0>0$.

	Step 2. We are going to prove that there are constants $C>0,\eta_1>0,\beta_1>0$ such that
	\beqlb\label{j=1}
	|\lambda(s)|^{n+1}|\phi_{k,n+1}'(s)-\phi_{k,n}'(s)|\leq Ce^{-\beta_1 n},\quad n\in\mathbb{N}, s\in I_{\eta_1}.
	\eeqlb
	In fact, from the definition of $\phi_{k,n}$, we have
	\beqlb\label{bds}
	\lambda(s)^{n+1}(\phi_{k,n+1}(s)-\phi_{k,n}(s))=\mathbb{E}_k[Z_{n+1}^{\mathrm{i}s};Z_{n+1}>0]-\lambda(s)\mathbb{E}_k[Z_n^{\mathrm{i}s};Z_n>0].
	\eeqlb
	Denote
	\beqnn
	L_{k,n}(s):=\lambda(s)^{n+1}(\phi_{k,n+1}(s)-\phi_{k,n}(s))
	\eeqnn
	and
	\beqnn
	J_{k,n}(s):=\mathbb{E}_k[Z_{n+1}^{\mathrm{i}s};Z_{n+1}>0]-\lambda(s)\mathbb{E}_k[Z_n^{\mathrm{i}s};Z_n>0].
	\eeqnn
	Differentiating the function $L_{k,n}(s)$ we obtain
	\beqnn
	L_{k,n}'(s)=(n+1)\lambda'(s)\lambda(s)^n(\phi_{k,n+1}(s)-\phi_{k,n}(s))+\lambda(s)^{n+1}(\phi_{k,n+1}'(s)-\phi_{k,n}'(s)).
	\eeqnn
	Thus, it is sufficient to prove that there are constants $C,\eta,\beta>0$ such that
	\beqlb\label{suff1}
	|(n+1)\lambda'(s)\lambda(s)^n(\phi_{k,n+1}(s)-\phi_{k,n}(s))|\leq Ce^{-\beta n}, \quad
	s\in I_{\eta},
	\eeqlb
	and
	\beqlb\label{suff2}
	|J_{k,n}'(s)|\leq Ce^{-\beta n},\quad
	s\in I_{\eta}.
	\eeqlb
	Since $|\lambda'(s)|\leq \mathbb{E}| \log m_1 |<\infty$, in view of (\ref{suff}),
	\beqnn
	|(n+1)\lambda'(s)\lambda(s)^n(\phi_{k,n+1}(s)-\phi_{k,n}(s))|&\leq & C(n+1)e^{-\beta_0 n}|\lambda(s)|^n\mathbb{E}|  \log m_1 |, \quad \text{for} \quad s\in I_{\eta_0}\crcr
	&\leq &Ce^{-\beta n}, \quad \text{for} \quad s\in I_{\eta}
	\eeqnn
	where $\eta$ is taken small enough to ensure $e^{-\beta_0}|\lambda(s)|\leq e^{-\beta}<1$ for $s\in I_{\eta}$.
	
	For (\ref{suff2}), noticing that by the independence of $Z_n$ and $m_{n+1}$,
	\beqnn
	J_{k,n}'(s)&=&\mathbb{E}_k[\mathrm{i}\log Z_{n+1}\cdot Z_{n+1}^{\mathrm{i}s};Z_{n+1}>0]-\lambda'(s)\mathbb{E}_k[Z_n^{\mathrm{i}s};Z_n>0]-\lambda(s)\mathbb{E}_k[\mathrm{i}\log Z_n\cdot Z_n^{\mathrm{i}s};Z_n>0]\crcr
	&=&\mathbb{E}_k[\mathrm{i}\log Z_{n+1}\cdot Z_{n+1}^{\mathrm{i}s};Z_{n+1}>0]-\mathbb{E}_k[\mathrm{i}\log m_{n+1}\cdot(m_{n+1}Z_n)^{\mathrm{i}s};Z_n>0]\crcr
	&&-\mathbb{E}_k[\mathrm{i}\log Z_n\cdot(m_{n+1}Z_n)^{\mathrm{i}s};Z_n>0]\crcr
	&=&\mathbb{E}_k[\mathrm{i}\log Z_{n+1}\cdot Z_{n+1}^{\mathrm{i}s};Z_{n+1}>0]-\mathbb{E}_k[\mathrm{i}\log (m_{n+1}Z_n)\cdot(m_{n+1}Z_n)^{\mathrm{i}s};Z_n>0].
	\eeqnn
	Thus, using (\ref{basicineq}) and the independence of $(Y_{n+1},m_{n+1})$ and $Z_n$ we obtain
	\beqnn
	&&|J_{k,n}'(s)|\crcr
	&\leq & |\mathbb{E}_k[\mathrm{i}\log Z_{n+1}\cdot Z_{n+1}^{\mathrm{i}s};Z_{n+1}>0,Z_n>0]-\mathbb{E}_k[\mathrm{i}\log (m_{n+1}Z_n)\cdot Z_{n+1}^{\mathrm{i}s};Z_{n+1}>0,Z_n>0]|\crcr
	&&+|\mathbb{E}_k[\mathrm{i}\log (m_{n+1}Z_n)\cdot Z_{n+1}^{\mathrm{i}s};Z_{n+1}>0,Z_n>0]\crcr
	&&   -\mathbb{E}_k[\mathrm{i}\log (m_{n+1}Z_n)\cdot(m_{n+1}Z_n)^{\mathrm{i}s}; Z_{n+1}>0,Z_n>0]|\crcr
	&&+|\mathbb{E}_k[\mathrm{i}\log Z_{n+1}\cdot Z_{n+1}^{\mathrm{i}s};Z_{n+1}>0,Z_n=0]|+|\mathbb{E}_k[\mathrm{i}\log (m_{n+1}Z_n)\cdot (m_{n+1}  Z_{n} )^{\mathrm{i}s};Z_{n+1}=0,Z_n>0]|\crcr
	&\leq & \mathbb{E}_k[|\log\Delta_n|;Z_{n+1}>0,Z_n>0]+|s|\mathbb{E}_k[|\log(m_{n+1}Z_n)||\log\Delta_n|;Z_{n+1}>0,Z_n>0]\crcr
	&&+\mathbb{E}[\log Y_1;Y_1>0]\mathbb{P}_k(Z_n=0)+(\mathbb{E} |\log m_1|^q)^{1/q}(\mathbb{P}_k(Z_{n+1}=0))^{1/r}\crcr
	&&+\mathbb{E}_k[\log Z_n;Z_{n+1}=0,Z_n>0],
	\eeqnn
	where we use H$\ddot{o}$lder's inequality in the last inequality with $q>1$ taken as in  Assumption (C) and $r>1$ such that $1/r+1/q=1$.
	Note that   $\mathbb{E}\log m_1<\infty$ and by  Assumption (C) $\mathbb{E}[\log Y_1;Y_1>0]<\infty$. Applying Lemmas \ref{pzn=0}, \ref{logzn} and \ref{logdeltan} we have
	\beqnn
	|J_{k,n}'(s)|\leq Ce^{-\beta n},\quad s\in I_\eta
	\eeqnn
	for some $\beta,C>0$. Therefore,  we conclude (\ref{suff1}) and (\ref{suff2}), which yield (\ref{j=1}).
	
	Step 3. We shall prove the Lemma by induction. We are going to prove that for any $j\leq K$ there are $C,\eta_j,\beta_j>0$ such that for all $k\geq 1$,
	\beqlb\label{keykey}
	|\lambda(s)|^{n+1}|\phi_{k,n+1}^{(j)}(s)-\phi_{k,n}^{(j)}(s)|\leq Cn^je^{-\beta_j n}, \quad n\in\mathbb{N},s\in I_{\eta_j}.
	\eeqlb
	We finish the proof of the case when $j=1$ in step 2, which implies that the sequence of derivatives $\phi_{k,n}'$ converges uniformly to some function $\psi$ on $I_{\eta_1}$. Therefore $\phi'=\psi$ and $\phi$ is continuously differentiable (see e.g. \cite[Theorem 14.7.1]{Tao2006}). The same inductive argument guarantees that $\phi\in C^K(I_{\eta_K})$ and since $\lambda$ is continuous with $\lambda(0)=1$, inequality (\ref{keyineq}) follows from (\ref{keykey}) with $\min_{j\leq K} \beta_j\geq \beta>0$ and $\eta<\min_{j\leq K}\eta_j$.
	
	Suppose that (\ref{keykey}) holds for $j\leq l-1$ and recall (\ref{bds}). Using the binomial formula for $L_{k,n}^{(l)}(s)$, to prove (\ref{keykey}) it is sufficient to prove
	\beqlb\label{key1}
	|L_{k,n}^{(l)}(s)-\lambda(s)^{n+1}(\phi_{k,n+1}^{(l)}(s)-\phi_{k,n}^{(l)}(s))|&=&\bigg|\sum_{j=0}^{l-1}\tbinom{l}{j}(\lambda(s)^{n+1})^{(l-j)}(\phi_{k,n+1}^{(j)}(s)-\phi_{k,n}^{(j)}(s))\bigg|\crcr
	&\leq & Cn^le^{-\beta n}
	\eeqlb
	and
	\beqlb\label{key2}
	|J_{k,n}^{(l)}(s)|\leq C^{-\beta n}
	\eeqlb
	for some $C,\beta>0$. In fact, by the induction hypothesis and (3.21) in \cite{2022dar} we have for $j\leq l-1$,
	\beqnn
	|(\lambda(s)^{n+1})^{(l-j)}(\phi_{k,n+1}^{(j)}(s)-\phi_{k,n}^{(j)}(s))|=\bigg|\frac{(\lambda(s)^{n+1})^{(l-j)}}{\lambda(s)^{n+1}}\bigg||\lambda(s)^{n+1}||\phi_{k,n+1}^{(j)}(s)-\phi_{k,n}^{(j)}(s)|\crcr
	\leq C_{l-j}n^{l-j}Cn^je^{-\beta n},
	\eeqnn
	which yields (\ref{key1}).
	
	For (\ref{key2}), note that
	\beqnn
	J_{k,n}^{(l)}(s)=\mathbb{E}_k[(\mathrm{i}\log Z_{n+1})^l\cdot Z_{n+1}^{\mathrm{i}s};Z_{n+1}>0]-\mathbb{E}_k[(\mathrm{i}\log (m_{n+1}Z_n))^l\cdot(m_{n+1}Z_n)^{\mathrm{i}s};Z_n>0].
	\eeqnn
	Thus, using (\ref{basicineq}), H\"{o}lder's inequality with $1/r+l/q=1$ and the independence of $(Y_{n+1},m_{n+1})$ and $Z_n$ we obtain
	\beqlb\label{jnk}
	|J_{k,n}^{(l)}(s)|&\leq &|\mathbb{E}_k[(\mathrm{i}\log Z_{n+1})^l\cdot Z_{n+1}^{\mathrm{i}s};Z_{n+1}>0,Z_n>0]-\mathbb{E}_k[(\mathrm{i}\log (m_{n+1}Z_n))^l\cdot Z_{n+1}^{\mathrm{i}s};\crcr
	&& \quad \quad Z_{n+1}>0,Z_n>0]|\crcr
	&&+|\mathbb{E}_k[(\mathrm{i}\log (m_{n+1}Z_n))^l\cdot Z_{n+1}^{\mathrm{i}s};Z_{n+1}>0,Z_n>0]-\mathbb{E}_k[(\mathrm{i}\log (m_{n+1}Z_n))^l\cdot(m_{n+1}Z_n)^{\mathrm{i}s};\crcr
	&& \quad \quad Z_{n+1}>0,Z_n>0]|\crcr
	&&+|\mathbb{E}_k[(\mathrm{i}\log Z_{n+1})^l\cdot Z_{n+1}^{\mathrm{i}s};Z_{n+1}>0,Z_n=0]|\crcr
	&&+|\mathbb{E}_k[(\mathrm{i}\log (m_{n+1}Z_n))^l\cdot (m_{n+1}Z_n)^{\mathrm{i}s};Z_{n+1}=0,Z_n>0]|\crcr
	&\leq &\mathbb{E}_k[|(\log Z_{n+1})^l-(\log (m_{n+1}Z_n))^l|;Z_{n+1}>0,Z_n>0]\crcr
	&&+|s|\mathbb{E}_k[|\log(m_{n+1}Z_n)|^l|\log\Delta_n|;Z_{n+1}>0,Z_n>0]\crcr
	&&+\mathbb{E}_k[(\log Y_1)^l;Y_1>0]\mathbb{P}_k(Z_n=0)+C(\mathbb{E}[| \log m_1|^q])^{l/q}(\mathbb{P}_k(Z_{n+1}=0))^{1/r}\crcr
	&&+C\mathbb{E}_k[(\log Z_n)^l;Z_{n+1}=0,Z_n>0]  \crcr
	&\leq & \mathbb{E}_k[|(\log Z_{n+1})^l-(\log (m_{n+1}Z_n))^l|;Z_{n+1}>0,Z_n>0]+Ce^{-\beta n},
	\eeqlb
	where we use  Assumption (C), Lemmas \ref{pzn=0}, \ref{logzn} and \ref{logdeltan} in the last inequality.  Since $a^k-b^k=(a-b)(a^{k-1}+a^{k-2}b+\cdots+b^{k-1})$, by Lemma \ref{logdeltan} we have
	\beqnn
	&&\mathbb{E}_k[|(\log Z_{n+1})^l-(\log (m_{n+1}Z_n))^l|;Z_{n+1}>0,Z_n>0]\crcr
	&\leq &\sum_{j=0}^{l-1}\mathbb{E}_k[(\log Z_{n+1})^j|\log(m_{n+1}Z_n)|^{l-1-j}|\log\Delta_n|;Z_{n+1}>0,Z_n>0]\leq Ce^{-\beta n}
	\eeqnn
	with some $C,\beta>0$.
	Combining this with (\ref{jnk}) yields (\ref{key2}). Hence the induction argument is complete and we finish the proof of Lemma.
\end{proof}

\textbf{Proof of Theorem \ref{CLT}:}

Lemma \ref{key} implies that the sequence of functions $\phi_{k,n}(s)=\frac{\mathbb{E}_k[Z_n^{\mathrm{i}s};Z_n>0]}{\lambda(s)^n}$ converges uniformly on some interval $I_\eta$ to a continuous function $\phi$ and we also have $\phi(0)=1$ by Lemma \ref{pzn=0}. Therefore for every $s\in\mathbb{R}$,
\beqnn
\mathbb{E}_k\bigg[e^{\mathrm{i}s\frac{\log Z_n-n\mu}{\sqrt{n}\sigma}};Z_n>0\bigg]=\phi_{k,n}\bigg(\frac{s}{\sqrt{n}\sigma}\bigg)\lambda^n\bigg(\frac{s}{\sqrt{n}\sigma}\bigg)e^{-\mathrm{i}s\sqrt{n}\mu/\sigma}\rightarrow e^{-s^2/2} \quad \text{as} \quad n\rightarrow\infty,
\eeqnn
where we use the classical central limit theorem for the random walk $S_n$, i.e.
\beqnn
\lambda^n(\frac{s}{\sqrt{n}\sigma})e^{-\mathrm{i}s\sqrt{n}\mu/\sigma}=\mathbb{E}\bigg[e^{\mathrm{i}s\frac{S_n-n\mu}{\sqrt{n}\sigma}}\bigg]\rightarrow e^{-s^2/2}\quad \text{as} \quad n\rightarrow\infty
\eeqnn
for every $s\in\mathbb{R}$, since $\mathbb{E}(\log m_1)^2<\infty$. 
Thus we finish the proof.\qed

\textbf{Proof of Theorems \ref{edgeworth} and \ref{renewal}:} By Lemma~\ref{key}, there exist $\beta>0$ and a neighborhood $I_\eta$,   in which we can expand $\phi_{k,n}(s)$ as
$$
\phi_{k,n}(s)=\sum_{l=0}^{r-1}\frac{\phi_{k,n}^{(l)}(0)s^l}{l!}+O(s^r)=1+\sum_{l=0}^{r-1}\frac{\phi^{(l)}(0)s^l}{l!}+O(s^r)+o(se^{-\beta n}),\quad s\in I_\eta.
$$
Using similar methods with \cite{2022dar} and replacing the $\mathbb{E}_\mathcal{S}$ by $\mathbb{E}[\cdot;Z_n>0]$ as in the proof of Theorem \ref{CLT} we get the desired results.\qed

\noindent\bf{\footnotesize Acknowledgements}\quad\rm{\footnotesize
This work is supported in part by the National
Nature Science Foundation of China (Grant No.~12271043), the National Key Research and Development Program of China (No.~2020YFA0712900).\\[4mm]

	

\end{document}